\definecolor{cobalt}{rgb}{0.0,0.28,0.67}
\newtheorem{theorem}{Theorem}[section]
\newtheorem{lemma}[theorem]{Lemma}
\newtheorem{proposition}[theorem]{Proposition}
\newtheorem{corollary}[theorem]{Corollary}
\newtheorem{remark}[theorem]{Remark}
\newtheorem{definition}[theorem]{Definition}
\theoremstyle{nonumberplain}
\newtheorem{proof}{Proof}
\numberwithin{equation}{section}
\crefname{equation}{}{}
\DeclareMathOperator{\divergence}{div}
\DeclareMathOperator*{\argmin}{arg\,min}
\DeclareMathOperator*{\minimize}{minimize}
\DeclarePairedDelimiter\abs{\lvert}{\rvert}
\DeclarePairedDelimiter\norm{\lVert}{\rVert}
\DeclareMathOperator{\sfdivergence}{\mathsf{div}}
\newcommand{\sV}{\mathsf{V}}
\newcommand{\sv}{\mathsf{v}}
\newcommand{\sE}{\mathsf{E}}
\newcommand{\se}{\mathsf{e}}
\newcommand{\sW}{\mathsf{W}}
\newcommand{\sw}{\mathsf{w}}
\newcommand{\su}{\mathsf{u}}
\newcommand{\sff}{\mathsf{f}}
\newcommand{\sH}{\mathsf{H}}
\newcommand{\si}{\mathsf{i}}
\newcommand{\sj}{\mathsf{j}}
\newcommand{\sJ}{\mathsf{J}}
\newcommand{\sB}{\mathsf{B}}
\title{On the ROF Model in Rectilinear Anisotropy: Piecewise Constant Approximation and Universal Minimality}
\author{
	Clemens Kirisits\thanks{Faculty of Mathematics, University of Vienna, Oskar-Morgenstern-Platz 1, A-1090 Vienna, Austria; Christian Doppler Laboratory for Mathematical Modeling and Simulation of Next-Generation Ultrasound Devices (MaMSi), Oskar-Morgenstern-Platz 1, A-1090 Vienna, Austria\\ \texttt{clemens.kirisits@univie.ac.at}}
	\and
	Eric Setterqvist\thanks{Santa Anna IT Research Institute, SE-58183 Linköping, Sweden\\ \texttt{eric.setterqvist@santa-anna.se}}
	}
\begin{document}

\maketitle

\begin{abstract}
We prove that the $L^2$ distance between the minimizer of the $\ell^1$-anisotropic Rudin-Osher-Fatemi (ROF) functional and its minimizer over the space of piecewise constant functions on a rectilinear grid is $\mathcal{O}(h^{\sfrac12 - \sfrac{q'}{2q}})$, where $h$ is the grid's mesh size and the datum belongs to $L^q$, $q \ge 2$. These convergence rates are valid in any dimension $d\ge 1$. However, in dimension $d = 1$ they can be further improved to $\mathcal{O}(h^{\sfrac12 - \sfrac{1}{2q}})$. To establish the error bounds, $L^q$ estimates of the ROF minimizer in terms of the datum are critical. Such estimates are particular cases of a universal minimality property of the ROF minimizer derived in the second part of the paper. There it is shown, in both the finite-dimensional and infinite-dimensional settings, that the minimizer simultaneously minimizes a broad class of convex functionals over a neighbourhood of the datum arising in the convex dual of the ROF problem. This extends previous results of similar type about taut strings and the ROF problem.
\end{abstract}

\paragraph{Keywords} total variation, denoising, Rudin--Osher--Fatemi model, approximation, piecewise constant, invariant $\varphi$-minimal sets

\paragraph{MSC codes} 46N10, 65D18, 65K99, 65N15, 68U10, 92C55

\section{Introduction}\label{sec:intro}
Total variation regularization has become a standard in image processing and inverse problems, valued for its capability to suppress noise without losing edge sharpness. A starting point for this development was the publication of the Rudin-Osher-Fatemi (ROF) denoising model \cite{Rudin1}, which consists in minimizing the total variation balanced by a squared $L^2$ distance from the noisy datum.

\paragraph{Rectilinear anisotropy} This article is about an anisotropic variant of the ROF model. Specifically, given $f \in L^2(\Omega)$, we consider the problem 
\begin{align}\label{eq:rof-intro}
	\minimize_{u \in L^2(\Omega)\cap BV(\Omega)} \quad  \frac{1}{2}\left\| f - u \right\|^2_{L^2(\Omega) }+ \alpha J(u), \tag{P}
\end{align}
where $\Omega = \prod_{i=1}^d (a_i,b_i)$ is a bounded hyperrectangle of dimension $d\ge 1$, $\alpha > 0$ is a parameter controlling the degree of regularization and
\begin{align}\label{eq:J}
	J(u) = \sup  \left\{ \int_\Omega u \divergence H \, dx \,\middle|\, H \in C_c^\infty \big(\Omega, \mathbb{R}^d\big), \abs{H(x)}_\infty \le 1 \text{ for } x \in \Omega \right\}
\end{align}
is the $\ell^1$-anisotropic total variation of $u$. While it lacks the rotational symmetry of its isotropic counterpart, where  the Euclidean norm $\abs{\cdot}_2$ replaces the maximum norm $\abs{\cdot}_\infty$ in \cref{eq:J}, $J$ is a natural choice if one aims to exactly reconstruct rectilinear shapes, that is, shapes whose boundaries are aligned with the coordinate axes. In fact, the following result was shown in \cite{KirSchSet19,Lasica1}.
\begin{theorem}\label{thm:fpcrupcr-intro}
If $f$ is piecewise constant on a rectilinear grid, then the solution of \cref{eq:rof-intro} is piecewise constant on the same grid.
\end{theorem}
See \cref{fig:denoised-qr} for an illustration of this statement. Applications where this rectilinear bias is exploited can be found, for example, in \cite{BerBurDroNem06,choksi2011,SanOzkRomGok18,Setzer1}. Moreover, in the context of numerical analysis of conservation laws the total variation is frequently defined in the anisotropic fashion \cref{eq:J}. See, for instance, \cite{CraMaj80,HolRis11,LeV92}. We also remark that close connections between $\ell^1$-anisotropic total variation regularization and wavelet denoising, in particular Haar thresholding, have been revealed in e.g.\ \cite{CohDeVPetXu99}.

\begin{figure}
	\centering
	\includegraphics[width=.3\textwidth]{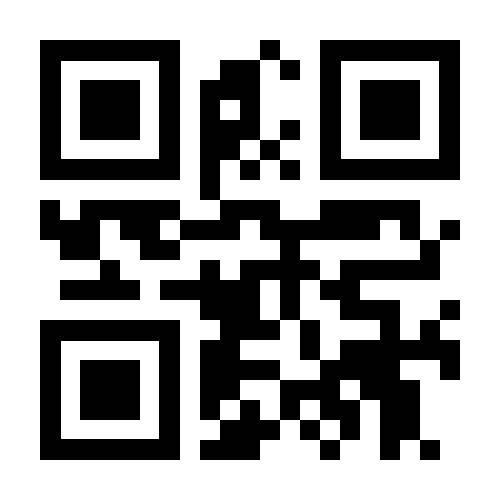} \quad
	\includegraphics[width=.3\textwidth]{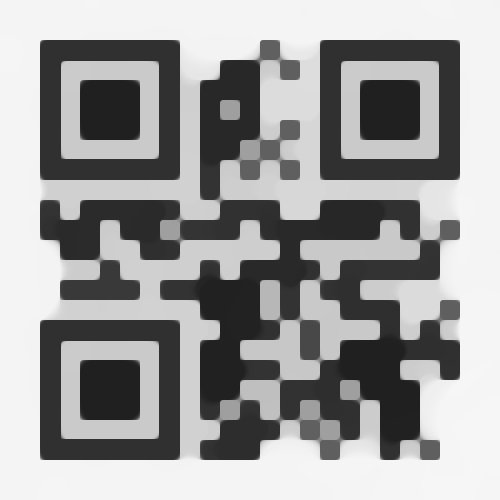} \quad
	\includegraphics[width=.3\textwidth]{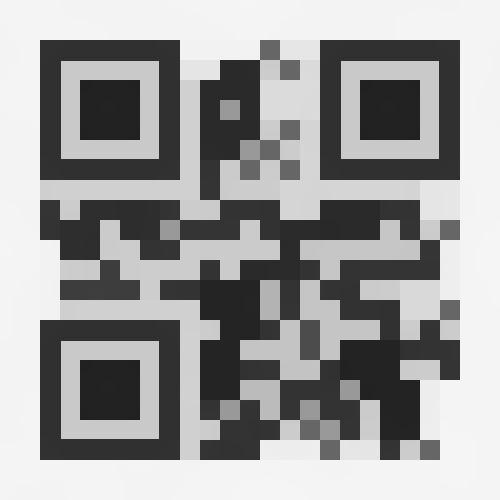}
	\caption{Left: QR code $f$. Mid: Minimizer of the isotropic ROF for datum $f$. Right: Minimizer of the $\ell^1$-anisotropic ROF for datum $f$. The mid and right images were obtained using the algorithm from \cite{ChaPoc11}.} \label{fig:denoised-qr}
\end{figure}

\paragraph{PCR approximation} One of the central themes of the present article is the finite-dimensional approximation of \cref{eq:rof-intro} using functions that are piecewise constant on rectilinear grids. Following \cite{Lasica1} we refer to these functions as $PCR$. Several intrinsic properties of $PCR$ functions make them a natural choice for approximating \cref{eq:rof-intro}.  One of these properties is stated in \cref{thm:fpcrupcr-intro}. Another one is the fact that for every $u \in L^p(\Omega ) \cap BV(\Omega)$, $1 \le p < \infty$, there is a sequence of $PCR$ functions such that $u_n \to u$ in $L^p(\Omega)$ and $J(u_n) \to J(u)$, see \cite[Thm.~3.4]{CasKunPol99}. Furthermore, if $u,f$ are $PCR$ with values $u_i,f_i$ on the $i$-th element, then the functional in \cref{eq:rof-intro} can be written as
\begin{align} \label{eq:discrete-energy-intro}
	\sum_i w_i \abs{u_i-f_i}^2 + \sum_{i,j} w_{ij} \abs{u_i-u_j}
\end{align}
for suitable weights $w_i,w_{ij}\ge 0$. Very efficient --- e.g.\ graph cut --- algorithms are available for the minimization of this type of discrete energy. See, for instance, \cite{BoyKol04,ChaDar09,GolYin09,Hoc01}.

\paragraph{Error bounds for PCR approximations of \cref{eq:rof-intro}} In \cite{CasKunPol99,KeeFit97} $PCR$ functions have been used to approximate $J$-regularized solutions of inverse problems. Our first result, \cref{thm:approximationerror}, extends these qualitative results by quantitative error estimates for problem \cref{eq:rof-intro}. Denote by $u_f$ the solution of \cref{eq:rof-intro} and by $u_{A_G f}$ the solution of \cref{eq:rof-intro} where $f$ is replaced by its projection $A_Gf$ onto the space of $PCR$ functions on a given grid $G$. Note that $u_{A_G f}$ is $PCR$ due to \cref{thm:fpcrupcr-intro} and can be found by minimizing a discrete energy of the form \cref{eq:discrete-energy-intro}. \Cref{thm:approximationerror} implies that for every $f \in L^q(\Omega)$, $2 \le q \le \infty$, 
\begin{align}\label{eq:l2rate}
	\norm[\big]{ u_{A_{G}f} - u_f}_{L^2(\Omega)} = \mathcal{O}\big(h^{\frac12 - \frac{q'}{2q}}\big)
\end{align}
as well as
\begin{align}\label{eq:tvrate}
	\abs[\big]{ J(u_{A_{G}f}) - J(u_f)} = \mathcal{O}\big(h^{\frac12 - \frac{q'}{2q}}\big),
\end{align}
where $h$ is the longest side length of all hyperrectangles in the partition of $\Omega$ and $q'$ is the Hölder conjugate of $q$. We note that the maximal rate of $h^{\sfrac12}$ (corresponding to $q=\infty$) for the $L^2$ approximation error is, in fact, optimal for $f \in L^\infty(\Omega) \cap BV(\Omega)$.

The rates \cref{eq:l2rate} and \cref{eq:tvrate} are valid in every dimension $d\ge 1$. In dimension $d=1$, however, where there is no anisotropy and \cref{eq:rof-intro} coincides with the standard ROF problem, the exponent of $h$ can be further increased to $\sfrac12 - \sfrac{1}{2q}$. Thus, in dimension $d=1$ there is a guaranteed rate of $h^{\sfrac14}$ for every $f \in L^2(\Omega)$. Moreover, the one-dimensional ROF model is known to be equivalent to an instance of the taut string problem as well as the one-dimensional TV flow, both in the continuous and discrete settings. For further details, see, for instance, \cite{Gra07,Mammen,SteWeiBroMraWel04} and \cite[Thm.~4.38]{Scherzer1}. Consequently, the estimates established here are also applicable to the solutions of these equivalent problems.

To the best of our knowledge convergence rates for $PCR$ approximations of problem \cref{eq:rof-intro} have not been published so far. In \cite{BarNocSal15} a rate of order $\mathcal{O}(h^{\sfrac12})$ was established for approximations of \cref{eq:ROF} by continuous piecewise (bi)linear functions on uniform triangular (rectangular) meshes assuming $f \in L^\infty(\Omega)$. Comparable error estimates of order $\mathcal{O}(h^\beta)$, $\beta \le \sfrac{1}{2}$, exist for various discretizations of the isotropic ROF model. See, for instance, \cite{Bar12, Bar21, CaiCha23, LaiLucWan09, WanLuc11}.

Returning to \cref{thm:approximationerror}, the specific error bound it establishes given $f \in L^q(\Omega)$, $2 \le q \le \infty$, reads
\begin{align}\label{eq:errorbound-intro}
	\norm*{ u_{A_Gf} - u_f}^{2}_{L^2} \le 2^{1+\frac{q'}{q}} J(u_f)^{1-\frac{q'}{q}} \norm*{u_f}_{L^q}^{\frac{q'}{q}} \left(\norm{u_f}_{L^q} + \norm{f}_{L^q} \right) h^{1-\frac{q'}{q}}.
\end{align}
Clearly, this estimate is only useful if $u_f \in L^q(\Omega)$ whenever $f \in L^q(\Omega)$. That this is indeed the case is a consequence of \cref{thm:invgeneral}, shown at the very end of this article. Its proof requires certain results concerning a discrete version of \cref{eq:rof-intro}, which we turn to next.

\paragraph{Taut strings and invariant $\varphi$-minimal sets} The taut string problem, which seems to originate in the operations research literature of the 1950s, see \cite{Dan71,ModHoh55}, can be stated as follows. Given two linear splines $F \le G$ agreeing at the endpoints of an interval $I$, find, among all linear splines $U$ satisfying $F \le U \le G$, the one with the shortest graph. Geometrically, the solution is found by means of the following eponymous procedure: Place a loose string inside the tube formed by $F$ and $G$ and pull taut. An illustration is given in \cref{fig:tautstring}.
\begin{figure}
	\centering
	\includegraphics[width=.6\textwidth]{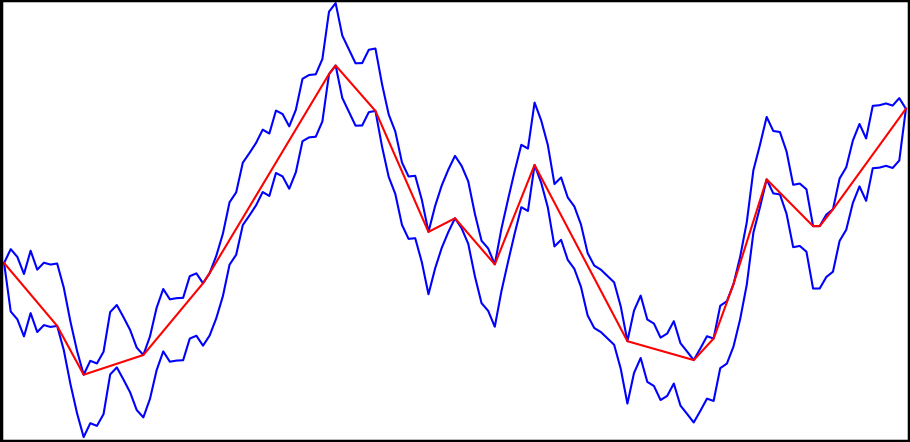}
	\caption{Example of a taut string (red) inside a tube.} \label{fig:tautstring}
\end{figure}
Besides having numerous applications ranging from statistics to communication theory and signal processing, taut strings enjoy a curious property, already pointed out in \cite{BBBB}. The taut string not only minimizes the graph length functional 
\begin{equation*}
	U \mapsto \int_I \sqrt{1+U'(x)^2} \, dx
\end{equation*}
among all linear splines lying between $F$ and $G$, but, in fact, it minimizes each of a whole class of problems where the function $y \mapsto \sqrt{1 + y^2}$ may be replaced by an arbitrary convex one.

Using an approach rooted in interpolation theory, this property of the taut string was further investigated in \cite{Kruglyak2}. There, a set $M \subset\mathbb{R}^n$ was termed invariant $\varphi$-minimal, if for every $a \in \mathbb{R}^n$ there is an $x_a \in M$ such that
\begin{equation}\label{eq:invphi}
	\sum^{n}_{i=1}\varphi\left(a_{i}-x_{a,i}\right)\leq\sum^{n}_{i=1}\varphi\left(a_{i}-x_{i}\right)
\end{equation}
for all $x \in M$ and all convex functions $\varphi: \mathbb{R} \to \mathbb{R}$. The set of derivatives of all admissible splines in the taut string problem is an example of such a set. A necessary and sufficient condition for a bounded, closed and convex set $M \subset \mathbb{R}^n$ to be invariant $\varphi$-minimal is that it is a convex polytope with all its faces being spanned by vectors of the form $e_i - e_j$, where $e_1,\ldots, e_n$ is the standard basis of $\mathbb{R}^n$. It turns out that these sets are precisely the base polyhedra associated to submodular functions on $2^{\{1,\ldots,n\}}$, see \cite[Thm.~17.1]{Fuj05} and \cite[Prop.\ 2.13]{KirSchSet19a}.

\paragraph{Universal minimality in the ROF model} Invariant $\varphi$-minimal sets also arise in connection with total variation minimization. An early indication of this circumstance was the result, established in \cite{Mammen}, that the discrete one-dimensional ROF problem is equivalent to an instance of the taut string problem. Then, in \cite[Thm.~4.46]{Scherzer1}, it was shown that the solution of the isotropic ROF model minimizes all functionals of the form
\begin{equation*}
	u \mapsto \int_\Omega \varphi(u(x))\, dx,
\end{equation*}
where $\varphi : \mathbb{R} \to \mathbb{R}$ is convex, over a neighbourhood of the datum $f \in L^2(\Omega)$. See also \cite[Lem.~1]{Ove19} for a similar result in dimension $d=1$. Another related result is \cite[Thm.\ 5.3, Rem.\ 5.4]{Kruglyak3} stating that the $L^1$-closure of the set of divergences of vector fields appearing in definition \cref{eq:J} is an invariant $\varphi$-minimal subset of $L^1(\Omega)$. Similar observations have been made in the graph setting. In \cite[Thm.\ 2.10]{KirSchSet19a} it was shown that the subdifferential of the graph total variation, given by $u \mapsto \sum_{v,\bar v}  \abs{u(v) - u(\bar v)}$, where $u$ is a real-valued vertex function and the sum runs over all pairs of adjacent vertices, is invariant $\varphi$-minimal. Consequently, the ROF minimizer on the graph, as considered there, has a universal minimality property analogous to that of the taut string.

\paragraph{The ROF model on weighted graphs} However, coming to back to problem \cref{eq:rof-intro} and its approximation by PCR functions, none of the results mentioned in the previous paragraph directly implies a similar universal minimality property for $u_f$ or its discrete approximation $u_{A_{G}f}$. We close this gap in the final sections of the present article.

The first step is \cref{prop:invphilambdaminimal}. It states that every bounded, closed and convex set $M\subset \mathbb{R}^n$ that is invariant $\varphi$-minimal also satisfies a weighted version of \cref{eq:invphi}. That is, for every $a\in\mathbb{R}^{n}$ and every $w\in\mathbb{R}^{n}_{>0}$ there is an $x_{a,w}\in M$ such that
\begin{align}\label{eq:phi-weight-intro}
	\sum^{n}_{i=1}w_{i}\varphi\left(\frac{a_{i}-x_{a,w,i}}{w_{i}}\right)\leq\sum^{n}_{i=1}w_{i}\varphi\left(\frac{a_{i}-x_{i}}{w_{i}}\right)
\end{align}
for all $x\in M$ and all convex $\varphi:\mathbb{R}\rightarrow\mathbb{R}$.

Now consider a graph $\mathsf{G} = (\sV,\sE)$ with vertex weights $\sw : \sV \to \mathbb{R}_{>0}$ and edge weights $\sW : \sE \to \mathbb{R}_{>0}$ as well as a given vertex function $\sff:\sV \to \mathbb{R}$ and the corresponding ROF problem on this graph

\begin{align}\label{eq:rof-graph-intro}
	\minimize_{\su:\sV \to \mathbb{R}} \quad  \frac{1}{2} \sum_{\sv \in \sV} \sw(\sv) \abs{\sff(\sv) - \su(\sv)}^2 + \alpha \sum_{(\sv,\bar \sv) \in \sE}  \sW(\sv, \bar \sv)  \abs{\su(\sv) - \su(\bar \sv)}. \tag{P$'$}
\end{align}
It follows from convex duality that \cref{eq:rof-graph-intro} is equivalent to
\begin{align*}
	\minimize_{\su \in \sff - \alpha \partial \sJ(0)} \quad  \sum_{\sv \in \sV} \sw(\sv) \abs{\su(\sv)}^2,
\end{align*}
where $\sJ$ is the weighted graph total variation, i.e.\ the second sum in \cref{eq:rof-graph-intro}. The above mentioned \cref{prop:invphilambdaminimal} combined with \cite[Thm.~2.4, Rem.~2.5]{Kruglyak3} implies that the solution $\su_{\sff}$ of \cref{eq:rof-graph-intro} minimizes not only the weighted $\ell^2$ norm over the set $\sff - \alpha \partial \sJ(0)$ but in fact every function of the form
\begin{align*}
	\su \mapsto \sum_{\sv \in \sV} \sw(\sv) \varphi(\su(\sv))
\end{align*}
where $\varphi: \mathbb{R} \to \mathbb{R}$ is convex. This is shown in \cref{thm:rofinvariancegraph} and generalizes \cite[Thm.\ 3.2]{KirSchSet19a}. 

\paragraph{Universal minimality in the anisotropic ROF} Crucially, the $PCR$ approximation $u_{A_G f}$ of $u_f$ can be identified with the solution of a problem of type \cref{eq:rof-graph-intro}. By means of an approximation argument we are therefore able to prove the analogue of \cref{thm:rofinvariancegraph} for problem \cref{eq:rof-intro}. Specifically, \cref{thm:invgeneral} states that, given datum $f\in L^{2}(\Omega)$, the inequality
\begin{align*}
	\int_{\Omega}\varphi(u_f(x))\, dx \le \int_{\Omega}\varphi(u(x)) \, dx
\end{align*}
holds for every $u \in f-\alpha\partial J(0)$ and every convex function $\varphi:\mathbb{R}\rightarrow\mathbb{R}$, where the set $f-\alpha\partial J(0)$ again arises in the convex dual of \cref{eq:rof-intro}. An immediate consequence, for example, is the estimate
\begin{align}\label{eq:main}
	\norm{u_f}_{L^q(\Omega)} \le \norm{f}_{L^q(\Omega)},
\end{align}
for $1 \le q \le \infty$. While \cref{thm:invgeneral} does not imply finiteness of these norms for $q>2$, we do have that $u_f$ belongs to $L^q(\Omega)$ whenever $f$ does. Note that this, in particular, validates the the error bound \cref{eq:errorbound-intro}. While \cref{thm:invgeneral} can be viewed as the analogue of \cite[Thm.~4.46]{Scherzer1} in $\ell^1$-anisotropy, it is noteworthy that the proofs of these two results are entirely different.

\paragraph{Outline} \Cref{sec:convex} collects results from convex analysis which apply to both \cref{eq:rof-intro} and \cref{eq:rof-graph-intro}. In \cref{sec:bv} we state an inequality of Poincar\'e type for functions of bounded variation which serves as the basis for later estimates. \Cref{sec:convergence} is devoted to the approximation space $PCR_G$ and properties of its associated $L^2$ projector $A_G$. The error estimate \cref{eq:errorbound-intro} is established in \cref{sec:setting}. \Cref{sec:phi} is about invariant $\varphi$-minimal sets in $\mathbb{R}^n$ and, in particular, the proof of \cref{eq:phi-weight-intro}. The ROF problem \cref{eq:rof-graph-intro} on the weighted graph is considered in \cref{sec:rof}. Finally, inequality \cref{eq:main} is established in \cref{sec:main}.
\section{Proximal mappings} \label{sec:convex}
In this section we collect some results about proximal mappings and support functions. For further details we refer to \cite{BauCom11}.

Below $\mathcal{H}$ denotes a real Hilbert space with inner product $\langle \cdot, \cdot \rangle$ and norm $\norm*{\cdot} = \langle \cdot, \cdot \rangle ^{\sfrac{1}{2}}$, and
\[F:\mathcal{H} \to \mathbb{R}\cup\{ + \infty\}\]
is a proper, convex and lower semicontinuous function. The \emph{subdifferential} of $F$ at $u \in \mathcal{H}$ is given by
\begin{equation*}
	\partial F (u) = \left\{w \in \mathcal{H} \mid F(v) - F(u) \ge \langle w, v - u\rangle \text{ for all } v \in \mathcal{H}\right\}.
\end{equation*}
If $F(u) \notin \mathbb{R}$, then $\partial F (u) = \emptyset$. For an arbitrary $f \in \mathcal{H}$ set
\begin{align*}
	E(u) \coloneqq \frac{1}{2} \norm*{ u - f }^2 + F(u), \quad u \in \mathcal{H}.
\end{align*}
\begin{proposition} \label[proposition]{thm:convexdiff}
For all $v \in \mathcal{H}$, all $u \in \mathcal{H}$ such that $\partial E(u) \neq \emptyset$ and all $w \in \partial E(u)$
\begin{equation}
	E(v) - E(u) \geq \frac{1}{2} \norm*{v - u}^2 + \langle w,v-u\rangle.
\end{equation}
\end{proposition}
\begin{proof}
Note that
\begin{equation}\label{eq:l2difference}
	\norm*{ v - f }^2 - \norm*{u - f }^2 = \norm*{v-u}^2 + 2 \langle u-f,v-u \rangle.
\end{equation}
Now, for all $y\in \partial F(u)$ we have
\begin{align*}
	E(v) - E(u)
		&=		\frac12 \norm*{ v - f}^2 - \frac12 \norm*{ u - f}^2 + F(v) - F(u) \\
		&\geq	\frac{1}{2} \norm*{v-u}^2 + \langle u - f,v-u \rangle + \langle y,v-u \rangle \\
		&=		\frac{1}{2} \norm*{v-u}^2 + \langle u - f + y,v-u\rangle.
\end{align*}
This ends the proof, because $\left\{u-f + y \mid y\in\partial F(u)\right\}=\partial E(u)$.
\end{proof}
Every proper, convex and lower semicontinuous function $F$ has a well-defined \emph{proximal mapping}, $\operatorname{prox}_F: \mathcal{H} \to \mathcal{H}$, which is given by
\[ \operatorname{prox}_F (f) = \argmin_{u \in \mathcal{H}} \left\{ \frac{1}{2} \norm*{u - f}^2 + F(u) \right\}.\]
\begin{proposition}\label[proposition]{thm:prox-nonexpansive}
For all $f,g \in \mathcal{H}$
\[ \norm*{\operatorname{prox}_F (f) - \operatorname{prox}_F (g)} \le \norm*{f-g}. \]
\end{proposition}
\begin{proof}
See \cite[Prop.~12.27]{BauCom11}.
\end{proof}
Next, we turn to a particular class of proper, convex and lower semicontinuous functions on $\mathcal{H}$. The \emph{support function} of a set $S\subset \mathcal{H}$ is defined as
\begin{align}\label{eq:support-fct}
	\sigma_S(u) = \sup_{v \in S} \langle v, u \rangle, \quad u \in \mathcal{H}.
\end{align}
The convex hull of $S$ is denoted by $\operatorname{co} S$ and its closure in $\mathcal{H}$ by $\overline{S}$.
\begin{proposition}\label[proposition]{thm:support-fct}
Let $S \subset \mathcal{H}$ be nonempty. Then $\sigma_S = \sigma_{\overline{\operatorname{co}S}}$ is proper, convex and lower semicontinuous and $\partial \sigma_S (0) = \overline{\operatorname{co}S}$.
\end{proposition}
\begin{proof}
See \cite[Props.~7.11, 14.11, 16.18]{BauCom11}.
\end{proof}
\begin{proposition}\label[proposition]{thm:dual2}
For all nonempty $S\subset \mathcal{H}$ and all $f \in \mathcal{H}$
\[ \operatorname{prox}_{\sigma_S} (f) = \argmin_{u \in f-\overline{\operatorname{co}S}} \norm*{u}.\]
\end{proposition}
\begin{proof}
This is a consequence of convex duality. The convex conjugate of $\sigma_S$ is the characteristic function of $\overline{\operatorname{co}S}$, that is, $\sigma_S^*(w) = 0$ for $w \in \overline{\operatorname{co}S}$ and $\sigma_S^*(w) = +\infty$ otherwise. Therefore, the dual problem of 
\begin{align*}
	\minimize_{u \in \mathcal{H}} \quad \frac12 \norm*{u-f}^2 + \sigma_S(u) 
\end{align*}
is equivalent to
\begin{align*}
	\minimize_{w \in \overline{\operatorname{co}S}} \quad \norm*{w-f}
\end{align*}
and according to \cite[Prop.~15.13, Thm.~19.1]{BauCom11} the primal solution $\hat u$ is related to the dual solution $\hat w$ by $\hat u = f - \hat w$. Hence
\begin{align*}
	\norm*{\operatorname{prox}_{\sigma_S} (f)} = \norm*{\hat u} = \norm*{f - \hat w} = \min_{w \in \overline{\operatorname{co}S}} \norm*{f-w} = \min_{v \in f - \overline{\operatorname{co}S}} \norm*{v}.
\end{align*}
\end{proof}
\begin{proposition}\label[proposition]{thm:diminishing}
Let $F$ be the support function of a nonempty subset of $\mathcal{H}$, and let $A:\mathcal{H} \to \mathcal{H}$ be selfadjoint. Then the following two statements are equivalent.
\begin{enumerate}
	\item $A(\partial F(0)) \subset \partial F(0)$
	\item $F(Au) \le F(u)$  for all $u \in \mathcal{H}$
\end{enumerate}
\end{proposition}
\begin{proof}
Due to \cref{thm:support-fct} $F$ is the support function of $\partial F(0)$. The selfadjointness of $A$ now implies
        \begin{align*}
            F(Au)   & 
                    =   \sup_{v \in \partial F(0)} \langle u,Av \rangle
                    =   \sup_{w \in A(\partial F(0))} \langle u,w \rangle 
                    \le \sup_{w \in \partial F(0)} \langle u,w \rangle
                    = F(u).
        \end{align*}
Conversely, let $v = Aw$ for some $w \in \partial F(0)$. Then
        \begin{align*}
            \langle v,u \rangle = \langle Aw , u \rangle = \langle w , Au \rangle \le F(Au) \le F(u)
        \end{align*}
        for all $u \in \mathcal{H}$. Hence $v \in \partial F(0).$
\end{proof}
\section{Total variation in $\ell^1$-anisotropy} \label{sec:bv}
From now on $\Omega \subset \mathbb{R}^d$, $d\ge 1$, denotes an open hyperrectangle,
\begin{align}\label{eq:Omega}
	\Omega = a + \prod_{i=1}^d \left( 0,l_i \right),
\end{align}
where $a \in \mathbb{R}^d$ and $l \in \mathbb{R}^d_{>0}$. The $d$-dimensional volume of $\Omega$ is denoted by $\abs*{\Omega}$ and the average of $g \in L^1(\Omega)$ by
\begin{equation}\label{eq:average}
	g_{\Omega} \coloneqq \frac{1}{\abs*{\Omega}}\int_{\Omega}g \, dx.
\end{equation}
Define
\begin{align}\label{eq:B}
	\mathcal{B}(\Omega) = \left\{ H \in C_c^\infty \left(\Omega, \mathbb{R}^d\right) \,\middle|\, \abs*{H(x)}_\infty \le 1 \text{ for all } x \in \Omega \right\}.
\end{align}
The $\ell^1$-anisotropic total variation on $\Omega$ of $g \in L^1(\Omega)$ is given by
\begin{align}\label{eq:TV}
	\operatorname{TV}_{1,\Omega}(g) = \sup_{H \in \mathcal{B}(\Omega)}  \int_\Omega g(x) \divergence H (x) \, dx.
\end{align}
The space of functions of bounded variation on $\Omega$,
\begin{align*}
	BV(\Omega) = \left\{ u \in L^1(\Omega) \,\middle|\, \operatorname{TV}_{1,\Omega}(u) < + \infty \right\},
\end{align*}
is unaffected by this choice of anisotropy.

We adopt the conventions
\begin{equation*}
	1^* \coloneqq
	\begin{dcases}
		\infty, & d=1,\\
		\frac{d}{d-1}, & d\geq 2,
	\end{dcases}
\end{equation*}
and $\sfrac{1}{p} \coloneqq 0$ when $p=\infty$. 

The following inequality of Poincar\'{e} type serves as the basis for subsequent approximations.
\begin{lemma} \label[lemma]{thm:poincare} Let $d \ge 1$ and $1\leq p\leq 1^*$. There is a constant $C = C(d,p) > 0$ such that
\begin{equation*}
	\norm*{ u - u_{\Omega}}_{L^p(\Omega)} \leq C \frac{\abs*{l}_\infty}{\abs*{\Omega}^{1-\frac{1}{p}}} \operatorname{TV}_{1,\Omega}(u),
\end{equation*}
for all $u \in BV(\Omega)$.
\end{lemma}
\begin{proof}
	Let $I = (0,1)$ and let $C = C(d,p) > 0$ be such that
	\begin{equation} \label{eq:gns}
		\norm*{\hat{u}-\hat{u}_{I^d}}_{L^{p}(I^d)}\leq C \operatorname{TV}_{1,I^d}(\hat{u})
	\end{equation}
	holds for all $\hat{u}\in BV(I^d)$, see e.g.\ \cite[Rem.~3.50]{AmbFusPal00}. By virtue of the transformation $ \phi : I^d \to \Omega$, $\phi(z) = \operatorname{diag}(l)z + a$, every $u\in BV(\Omega)$ can be identified with $\hat{u} = u \circ \phi \in BV(I^d)$. Taking into account \cref{eq:gns} and the fact that $\hat{u}_{I^d} = u_{\Omega}$, we obtain
	\begin{equation} \label{eq:estimaterectangle}
	\begin{aligned}
		\norm*{u-u_\Omega}_{L^p(\Omega)} = \abs*{\Omega}^{\frac{1}{p}} \norm*{\hat{u} - \hat{u}_{I^d}}_{L^p(I^d)} \leq C_d \abs*{\Omega}^{\frac{1}{p}}\operatorname{TV}_{1,I^d}(\hat{u}).
	\end{aligned}
	\end{equation}
	Note further that
	\begin{equation*}
	\begin{aligned}
		\operatorname{TV}_{1,I^d}(\hat{u})
			&=\sup_{\hat{H} \in \mathcal{B}(I^d)} \int_{I^d} \hat{u} \divergence \hat{H} dz
			=\sup_{H\in\mathcal{B}(\Omega)}\int_{\Omega} u \sum_{j=1}^d l_j \frac{\partial H}{\partial y_j} \frac{1}{\abs*{\Omega}}dy \\
			&\leq\frac{\abs*{l}_\infty}{\abs*{\Omega}}\sup_{H\in\mathcal{B}(\Omega)}\int_{\Omega}u \divergence H\,dy
			=\frac{\abs*{l}_\infty}{\abs*{\Omega}}\operatorname{TV}_{1,\Omega}(u).
	\end{aligned}
	\end{equation*}
	Applying the preceding estimate in \cref{eq:estimaterectangle} concludes the proof.
\end{proof}
For our purposes \cref{thm:poincare} is most useful when $p=1$ or $d=1$. See \cref{rem:best-exponent} below for further details. In order to simplify subsequent error estimates, while also making them dimension independent, we briefly point out next that $C=1$ is a possible choice in these two cases.

We recall the interpolation inequality for $L^p$ norms. Let $1\leq p\le r \le s\leq\infty$ and $g\in L^p\cap L^s$. Then $g\in L^r$ and
\begin{equation} \label{eq:interpolationineq}
\norm{g}_{L^r} \leq \norm{g}_{L^p}^{t} \norm{g}_{L^s}^{1-t},
\end{equation}
where $\frac{1}{r} = \frac{t}{p} + \frac{1-t}{s}$.

\begin{corollary} \label[corollary]{thm:constant1}
	Let $d \ge 1$. Then
	\begin{align*}
		\norm{u - u_{\Omega}}_{L^1(\Omega)} \le \abs{l}_\infty \operatorname{TV}_{1,\Omega}(u)
	\end{align*}
		for all $u \in BV(\Omega).$ If $d=1$, then
	\begin{align*}
		\norm{u - u_{\Omega}}_{L^p(\Omega)} \le \abs{\Omega}^\frac{1}{p} \operatorname{TV}_{1,\Omega}(u)
	\end{align*}
	for all $u \in BV(\Omega)$ and $1 \le p \le \infty$.
\end{corollary}

\begin{proof}
We only need to show that $C=1$ is possible in \cref{eq:gns} if $p=1$ or $d=1$.

Concerning $p=1$ this can be done with a slight modification of the proof of \cite[Prop.\ 12.29]{Leo09}, which concerns the isotropic Sobolev seminorms $(\int_\Omega \abs{\nabla u(x)}_2^p \, dx)^{\sfrac{1}{p}}$.

Regarding $d=1$, we have $\abs{u(x) - u_{I}} \le \int_0^1 |u'(y)|\,dy$ for every $u \in C^\infty(\bar I)$ and $x \in I$. A density argument shows that $\norm{u - u_{I}}_{L^\infty(I)} \le \operatorname{TV}_{1,I}(u)$ for all $u \in BV(I)$. Interpolating between $L^1$ and $L^\infty$ according to \cref{eq:interpolationineq} proves the claim.
\end{proof}

\section{Averaging on rectilinear grids}\label{sec:convergence}
Let $G = G(\Omega)$ be a \emph{grid} on $\Omega$, that is, a finite collection of affine hyperplanes of $\mathbb{R}^d$, each orthogonal to one of the coordinate axes. In addition, we assume that $G$ covers the entire boundary of $\Omega$ (recall \cref{eq:Omega}), that is, $\partial \Omega \subset \bigcup G$. The grid $G$ defines a natural \emph{partition}
\begin{equation*}
\mathcal{Q}(G) = \{R_1,\ldots, R_n\}
\end{equation*} of $\Omega$ into smaller open hyperrectangles. Note that in the particular case $d=1$, $G$ is a finite set of real numbers which induces a partition of $\Omega=(a,a+l)$ into open subintervals.

We denote by $PCR_G(\Omega)$, or simply $PCR_G$, the set of all functions $g:\Omega \to \mathbb{R}$ which are almost everywhere equal to finite linear combinations of indicator functions of elements of $\mathcal{Q}(G).$ That is, a $g \in PCR_G$ satisfies for almost every $x \in \Omega$
\[ g(x) = \sum_{i=1}^n g_i \mathbf{1}_{R_i}(x),\]
where $g_i = g_{R_i} \in \mathbb{R}$ and $\mathbf{1}_{S}(x) = 1$  if $x\in S$ and $\mathbf{1}_{S}(x)=0$ if $x \notin S$. For every $g \in PCR_G$
	\begin{align}\label{eq:pcr-tv}
	\operatorname{TV}_{1,\Omega}(g) = \sum_{i,j=1}^n H^{d-1}\big(\overline{R_i} \cap \overline{R_j}\big) \abs{g_i-g_j} < \infty,
	\end{align}
where $H^{d-1}$ is the Haus\-dorff measure of dimension $d-1$.

The \emph{averaging operator} $A_{G}:L^{1}(\Omega)\rightarrow PCR_{G}\left(\Omega\right)$ associated to $G$ is defined by
\begin{align*}
A_{G}g=\sum^{n}_{i=1}g_{R_i}\mathbf{1}_{R_{i}},
\end{align*}
recall \cref{eq:average}. The averaging operator satisfies
\begin{align}\label{eq:selfadjoint}
\int_\Omega g(x) A_G u(x) \, dx = \int_\Omega u(x) A_G g(x)\,dx 
\end{align}
for all $g,u \in L^1(\Omega).$ It follows that, on $L^2(\Omega)$, $A_G$ is the orthogonal projector onto $PCR_G$. Moreover, it is clear that
\begin{align}\label{eq:nonexpansive}
\|A_G g\|_{L^p(\Omega)} \le \|g\|_{L^p(\Omega)}
\end{align}
holds for $p = \infty$. That this inequality is valid also for $1\le p < \infty$ is a consequence of the next lemma.
\begin{lemma} \label[lemma]{lemma:operatorA}
	For every $g\in L^{1}(\Omega)$ and convex $\varphi:\mathbb{R} \to \mathbb{R}$
	\begin{align*}
	\int_{\Omega}\varphi\left(A_{G}g(x)\right) dx \leq \int_{\Omega}\varphi\left(g(x)\right) dx.
	\end{align*}
\end{lemma}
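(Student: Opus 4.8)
The plan is to reduce everything to Jensen's inequality applied separately on each rectangle of the partition $\mathcal{Q}(G)=\{R_1,\dots,R_N\}$. Write $A_G u=\sum_{i=1}^N c_i\mathbf{1}_{R_i}$ with $c_i=\frac{1}{|R_i|}\int_{R_i}u(s)\,ds$; each $c_i$ is a well-defined real number because $u\in L^1(\Omega)$ and $|R_i|>0$.

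First I would dispose of an integrability technicality so that the right-hand side makes sense. Since $\varphi$ is convex and finite on all of $\mathbb{R}$, it is continuous, and there exist $a,b\in\mathbb{R}$ with $\varphi(t)\ge at+b$ for all $t$. Hence $\varphi\circ u$ is measurable and minorized by the integrable function $au+b$, so $\int_{\Omega}\varphi(u)\,dx$ is well defined in $(-\infty,+\infty]$. If it equals $+\infty$, the claimed inequality is trivial; otherwise $\varphi\circ u\in L^1(\Omega)$, and in particular $\varphi\circ u\in L^1(R_i)$ for each $i$.

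The core step is then, on each cell $R_i$, Jensen's inequality for the probability measure $|R_i|^{-1}\,dx|_{R_i}$:
\begin{align*}
\varphi(c_i)=\varphi\!\left(\frac{1}{|R_i|}\int_{R_i}u(s)\,ds\right)\le \frac{1}{|R_i|}\int_{R_i}\varphi\big(u(s)\big)\,ds .
\end{align*}
Multiplying by $|R_i|$ and using that $A_G u\equiv c_i$ on $R_i$ gives
\begin{align*}
\int_{R_i}\varphi\big((A_G u)(x)\big)\,dx=|R_i|\,\varphi(c_i)\le\int_{R_i}\varphi\big(u(s)\big)\,ds .
\end{align*}

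Finally I would sum over $i=1,\dots,N$. The set $\bigcup G$ is a finite union of affine hyperplanes, hence Lebesgue-null, so $\{R_1,\dots,R_N\}$ partitions $\Omega$ up to a set of measure zero; consequently both $\int_{\Omega}\varphi(A_G u)\,dx$ and $\int_{\Omega}\varphi(u)\,dx$ split as the sums of the corresponding integrals over the $R_i$, and adding the displayed inequalities yields the assertion. I do not expect a genuine obstacle here: the only things requiring care are the well-definedness of $\int_{\Omega}\varphi(u)\,dx$ (handled by the affine minorant, with the $+\infty$ case dismissed at once) and the observation that the grid is null so the cells cover $\Omega$ up to measure zero; the substance of the statement is precisely cell-wise Jensen.
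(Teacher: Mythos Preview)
Your proof is correct and follows essentially the same approach as the paper, which simply attributes the result to Jensen's inequality applied cellwise. You have additionally spelled out the integrability technicality via an affine minorant and the observation that the grid is Lebesgue-null, details the paper leaves implicit.
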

\begin{proof}
	See \cite[Lem.\ 2]{KirSchSet19}.
\end{proof}

Let $D(G)$ denote the maximal diameter among all hyperrectangles in the partition $\mathcal{Q}(G)$ of $\Omega$. Averaging on finer and finer partitions of $\Omega$ results in a convergent procedure:
\begin{lemma}\label[lemma]{thm:convergenceaverage}
	Consider a sequence of grids $(G_k)$ such that $D(G_k)\to 0$ as $k\to\infty$. Then
	\begin{equation*}
	\norm{g-A_{G_k}g}_{L^p(\Omega)}\to 0
	\end{equation*}
	for $g\in L^{p}(\Omega)$, $1\leq p<\infty$.
\end{lemma}
\begin{proof}
	Let $g\in L^p(\Omega)$ and $\epsilon>0$. Take an $\eta \in C_c^\infty(\Omega)$ such that $\norm{g-\eta}_{L^p(\Omega)}<\epsilon$. As $\eta$ is uniformly continuous on $\Omega$, there is a $\delta>0$ for which
	\begin{equation} \label{eq:uniform}
	\abs{\eta(x)-\eta(y)}<\epsilon
	\end{equation}
	holds for every $x,y\in\Omega$ with $\abs{x-y}<\delta$. Let $D(G_k)\leq\delta$ and consider a hyperrectangle $R\in\mathcal{Q}(G_k)$. Applying the intermediate value theorem, there exists $\xi \in R$ satisfying $\eta(\xi) = \eta_R$.
	Taking into account \cref{eq:uniform}, we have for any $x\in R$
	\begin{equation} \label{eq:uniform_pcr}
	\abs{\eta(x)-A_{G_k}\eta(x)} = \abs{\eta(x)-\eta(\xi)}<\epsilon.
	\end{equation}
	As $R\in\mathcal{Q}(G_k)$ was chosen arbitrarily, it follows from \cref{eq:uniform_pcr} that
	\begin{equation*}
	\abs{\eta(x)-A_{G_k}\eta(x)} < \epsilon,\text{ a.e. } x\in\Omega,
	\end{equation*}
	and therefore
	\begin{equation*}
	\norm{\eta-A_{G_k}\eta}_{L^p(\Omega)} < \epsilon \abs{\Omega}^{\frac{1}{p}}.
	\end{equation*}
	Using the nonexpansiveness of $A_{G_k}$ on $L^p(\Omega)$, we derive the estimate
	\begin{align*}
	\norm{g-A_{G_k}g}_{L^p(\Omega)} \leq 2 \norm{g-\eta}_{L^p(\Omega)} + \norm{\eta-A_{G_k}\eta}_{L^p(\Omega)} < 2\epsilon+\epsilon \abs{\Omega}^{\frac{1}{p}}
	\end{align*}
	and the lemma is proved.
\end{proof}

We next turn to approximation of $u \in BV(\Omega)$ by $A_Gu$. See also \cite[Thm.~3.9]{CasKunPol99}. The vector of side lengths of $R_i \in \mathcal{Q}(G)$ is denoted by $r_i = (r_{i,1},\ldots, r_{i,d}) \in \mathbb{R}^d_{>0}$. We also introduce the notation
\begin{align*}
	h		&= h(G) = \max_{1\leq i\leq n}\abs*{r_i}_\infty
\end{align*}
for the greatest side length among all hyperrectangles in the partition $\mathcal{Q}(G)$.
\begin{theorem} \label{thm:rateconvaveraging}
	Let $d\ge 1$ and $1\leq p\leq 1^*$. There is a constant $C=C(d,p)>0$ such that
	\begin{equation}\label{eq:rateconvaveraging}
		\norm*{u-A_Gu}_{L^p(\Omega)} \leq C \max_{1 \le i \le n} \frac{\abs*{r_i}_\infty}{\abs*{R_i}^{1-\frac{1}{p}}} \operatorname{TV}_{1,\Omega}(u)
	\end{equation}
	for all $u \in BV(\Omega)$.
\end{theorem}
\begin{proof} 
	Taking into account \cref{thm:poincare}, we derive
	\begin{equation*}
	\begin{aligned}
		\norm*{u- A_{G}u}_{L^p(\Omega)}
			&\leq\sum_{j=1}^{n} \norm*{u-u_{R_j}}_{L^p(R_j)}
				\leq C \sum_{j=1}^{n} \frac{\abs*{r_{j}}_\infty}{\abs*{R_j}^{1-\frac{1}{p}}}\operatorname{TV}_{1,R_j}(u) \\
			&\leq C \max_{1 \le i \le n} \frac{\abs*{r_i}_\infty}{\abs*{R_i}^{1-\frac{1}{p}}}\sum_{j=1}^{n}\operatorname{TV}_{1,R_j}(u)
				\leq C \max_{1 \le i \le n} \frac{\abs*{r_i}_\infty}{\abs*{R_i}^{1-\frac{1}{p}}}\operatorname{TV}_{1,\Omega}(u).
	\end{aligned}
	\end{equation*}
\end{proof}
At this point we recall \cref{thm:constant1} and the two corresponding special cases of \cref{eq:rateconvaveraging} which will be used below: First, for $p=1$ we obtain
\begin{align}\label{eq:averaging-error-p1}
	\norm*{u-A_Gu}_{L^1(\Omega)} \leq h \operatorname{TV}_{1,\Omega}(u).
\end{align}
Second, in dimension $d=1$, where $\Omega$ is a bounded interval, we have
\begin{align}\label{eq:averaging-error-d1}
	\norm*{u-A_Gu}_{L^p(\Omega)} \leq h^{\frac{1}{p}} \operatorname{TV}_{1,\Omega}(u)
\end{align}
for $1 \le p \le \infty$.

The restriction of $\operatorname{TV}_{1,\Omega}$ to the Hilbert space $L^2(\Omega)$ is denoted by $J$, that is,
\begin{align*}
J : L^2(\Omega) \to [0,+\infty], \quad J(u) = \operatorname{TV}_{1,\Omega}(u).
\end{align*}
Note that $J = \sigma_{\divergence(\mathcal{B}(\Omega))}$. Therefore, due to \cref{thm:support-fct}, $\partial J(0) = \overline{\divergence ( \mathcal{B}(\Omega))}$, where the bar denotes closure in $L^2(\Omega)$.

The following result, stating that $A_G$ maps $\partial J(0)$ into itself, crucially relies on the fact that the rectilinearity of the grid is compatible with the anisotropy of $J$. 
\begin{theorem} \label{prop:projection}
	$A_{G}\left(\partial J(0)\right) \subset \partial J(0)$.
\end{theorem}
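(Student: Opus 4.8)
My plan is to reduce the inclusion to the single inequality $J(A_G v) \le J(v)$, valid for every $v \in L^2(\Omega)$, which I then prove in the dual formulation of $J$ by explicitly reconstructing an admissible vector field. For the reduction, recall that $J$ is the support function of $\partial J(0)$ (compare Remark~\ref{rem:supportfct}), so that $\partial J(0) = \{u^* \in L^2(\Omega) : \langle u^*, v\rangle_{L^2} \le J(v) \text{ for all } v \in L^2(\Omega)\}$. The averaging operator $A_G$ is the $L^2$-orthogonal projection onto $PCR_G(\Omega)$ and hence self-adjoint, so for $u^* \in \partial J(0)$ and arbitrary $v \in L^2(\Omega)$ one has $\langle A_G u^*, v\rangle_{L^2} = \langle u^*, A_G v\rangle_{L^2} \le J(A_G v)$; granting $J(A_G v) \le J(v)$, this gives $A_G u^* \in \partial J(0)$, which is the assertion. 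To prove $J(A_G v) \le J(v)$ we may assume $v \in BV(\Omega)$, since $J(v) = +\infty$ otherwise.

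Two structural facts drive the argument, and together they are exactly the compatibility between the rectilinearity of $G$ and the $\ell^1$ anisotropy mentioned before the statement. First, since the constraint $|H_i| \le 1$ in \eqref{eq:J} decouples across coordinates, $J$ splits as $J(v) = \sum_{i=1}^d |D_i v|(\Omega)$, where $|D_i v|(\Omega) = \sup\{\int_\Omega v\,\partial_i h\,dx : h \in C_c^\infty(\Omega),\ \|h\|_\infty \le 1\}$ is the total variation of the $i$-th distributional partial derivative. Second, since $\partial\Omega \subset \bigcup G$, the hyperplanes of $G$ orthogonal to $e_i$ partition $(a_i,b_i)$ into finitely many slabs, and the averaging operator factors as $A_G = A^{(1)} \circ \cdots \circ A^{(d)}$, where $A^{(i)}$ averages over these $x_i$-slabs; the operators $A^{(i)}$ pairwise commute, and $A^{(j)}$ commutes with $\partial_i$ whenever $i \ne j$.

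Now fix $H \in \mathcal{B}_1$. Using self-adjointness of $A_G$ once more, $\int_\Omega (A_G v)\,\divergence H\,dx = \int_\Omega v\,A_G(\divergence H)\,dx$, so it suffices to produce a vector field $\tilde H$ with $|\tilde H_i| \le 1$, with $\divergence \tilde H = A_G(\divergence H)$, and with $\int_\Omega v\,\divergence \tilde H\,dx \le J(v)$. Put $\hat H_i = \bigl(\prod_{j\ne i} A^{(j)}\bigr) H_i$; this is smooth in $x_i$, piecewise constant in $x' = (x_j)_{j\ne i}$, bounded by $1$ in modulus, and it vanishes on the faces $\{x_i = a_i\}$ and $\{x_i = b_i\}$ because $H_i$ does. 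Since the $A^{(j)}$ with $j \ne i$ commute with $\partial_i$, we have $A_G(\partial_i H_i) = A^{(i)}(\partial_i \hat H_i)$, which is constant on each $x_i$-slab with value the corresponding difference quotient of $\hat H_i$. Let $\tilde H_i$ be the function which, for each fixed $x'$, is continuous and piecewise linear in $x_i$ with $\partial_i \tilde H_i = A^{(i)}(\partial_i \hat H_i)$ and $\tilde H_i = 0$ on $\{x_i = a_i\}$; a telescoping sum shows that $\tilde H_i$ agrees with $\hat H_i$ on all grid hyperplanes orthogonal to $e_i$ and interpolates linearly in between, whence $|\tilde H_i| \le 1$ everywhere and $\tilde H_i = 0$ on $\{x_i = a_i\} \cup \{x_i = b_i\}$. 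Summing over $i$ gives $\divergence \tilde H = \sum_i \partial_i \tilde H_i = A_G(\divergence H)$. Integrating by parts one coordinate direction and one cross-section at a time — legitimate because $\tilde H_i$ is Lipschitz in $x_i$ and vanishes at $x_i = a_i$ and $x_i = b_i$, so that no boundary term survives — and using the slicing theorem for $v \in BV(\Omega)$, we conclude
\begin{equation*}
	\int_\Omega v\,\divergence \tilde H\,dx = -\sum_{i=1}^d \int_\Omega \tilde H_i\,dD_i v \le \sum_{i=1}^d |D_i v|(\Omega) = J(v).
\end{equation*}
Taking the supremum over $H \in \mathcal{B}_1$ yields $J(A_G v) \le J(v)$, as required.

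The main obstacle is this last inequality, and within it the reconstruction step: the fact that $A_G(\divergence H)$ can again be realized as $\divergence \tilde H$ with $|\tilde H_i| \le 1$. This is precisely where the rectilinearity of $G$ and the $\ell^1$ form of the anisotropy enter — for a non-axis-aligned grid, or for the isotropic total variation, neither the factorization $A_G = A^{(1)} \circ \cdots \circ A^{(d)}$ nor the splitting $J = \sum_{i} |D_i \cdot|$ is available, and in fact the conclusion of the theorem can fail. Everything else is careful but routine bookkeeping: one must check that $\hat H_i$ inherits the vanishing of $H_i$ on the two faces orthogonal to $e_i$ (so that the boundary terms in the slice-wise integration by parts really do disappear), and one must invoke the slicing theorem for BV functions in the form $|D_i v|(\Omega) = \int |D_{x_i} v(x', \cdot)|(a_i, b_i)\,dx'$ to move between the $d$-dimensional and one-dimensional variations. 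Self-adjointness of $A_G$ together with the identity $J = \sum_{i} |D_i \cdot|$ is what ties these pieces together.
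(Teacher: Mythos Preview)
Your proof is correct. It differs from the paper's route in a clean way: the paper (via its reference \cite[Thm.~1]{KirSchSet19}) works on the primal side, showing directly that every $u^* \in \partial J(0)$ satisfies $A_G u^* \in \Gamma_G \subset \partial J(0)$, where $\Gamma_G$ is an explicit set of $PCR_G$ functions admitting bounded piecewise-affine antiderivatives (cf.\ the proof of Lemma~\ref{thm:pcrsubgradients}). You instead pass to the dual via self-adjointness of $A_G$ and reduce everything to the single inequality $J(A_G v) \le J(v)$, which you then prove by the same piecewise-affine reconstruction applied to the \emph{test field} $H$ rather than to the subgradient. The underlying mechanism---factoring $A_G = A^{(1)}\circ\cdots\circ A^{(d)}$ and rebuilding a bounded potential slab by slab---is common to both arguments; what you gain is a more transparent statement (the averaging operator $A_G$ does not increase the anisotropic total variation) and a proof that avoids introducing the auxiliary set $\Gamma_G$, while the paper's route has the advantage of yielding the sharper identity $A_G(\partial J(0)) = \Gamma_G$, which is reused later in Lemma~\ref{thm:pcrsubgradients}.
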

\begin{proof}
	See \cite[Thm.\ 1]{KirSchSet19}. Note that the additional assumption in \cite[Thm.\ 1]{KirSchSet19}, that $G$ should cover $\partial \Omega$, was already imposed at the beginning of \cref{sec:convergence}.
\end{proof}
\Cref{prop:projection} and \cref{thm:diminishing} imply that $A_G$ is total variation diminishing.
\begin{corollary} \label[corollary]{thm:tvd} $J(A_Gu) \le J(u)$ for all $u \in L^2(\Omega)$.
\end{corollary}
\section{Error bounds for the $\ell^1$-anisotropic ROF model} \label{sec:setting}
Let $\Omega\subset \mathbb{R}^d$ be as in \cref{eq:Omega}. Given $f \in L^2(\Omega)$ and $\alpha >0$ we consider the problem
\begin{align}\label{eq:ROF}
	\minimize_{u \in L^2(\Omega)} \quad E(u;f),  \tag{P}
\end{align}
where
\begin{align*}
	 E(u;f) = \frac12 \norm*{ u-f }^2_{L^2(\Omega)} + \alpha J(u).
\end{align*}
The results of \cref{sec:convex} imply the well-posedness of problem \cref{eq:ROF}: It is uniquely solvable and stable with respect to perturbations in $f$. We set
\begin{align*}
	u_f = \argmin_{u \in L^2(\Omega)} E(u;f).
\end{align*}

\Cref{thm:fpcrupcr} below asserts that the $\ell^{1}$-anisotropic ROF model respects the piecewise constant structure of $PCR_{G}$ data. A proof for $d=1$, where the situation is comparatively simple, can be found in \cite[Lem.\ 4.34]{Scherzer1}. The case $d=2$ was resolved in \cite{Lasica1} while the general result ($d\ge 1$) was established in \cite{KirSchSet19}.
\begin{theorem}\label{thm:fpcrupcr}
If $f\in PCR_{G}$, then $u_f \in PCR_{G}$.
\end{theorem}
\begin{proof}
The proof combines \cref{thm:dual2} and \cref{lemma:operatorA} with \cref{prop:projection}. For details we refer to \cite[Thm.\ 2]{KirSchSet19} from which \cref{thm:fpcrupcr} follows.
\end{proof}

\begin{remark}\label[remark]{rem:uAf} 
\Cref{thm:fpcrupcr} implies that, for $PCR_{G}$ datum, \cref{eq:ROF} is a finite-dimen\-sion\-al problem. In fact, if $u,f \in PCR_{G}$, then
\begin{align}\label{eq:discrete-energy}
	E(u;f) = \frac12 \sum_{i=1}^n \abs{R_i} \abs{u_i-f_i}^2 + \alpha \sum_{i,j=1}^n H^{d-1}\big(\overline{R_i} \cap \overline{R_j}\big) \abs{u_i-u_j},
\end{align}
recall \cref{eq:pcr-tv}. The discrete energies arising in this way are considered in more detail in the following sections. For now we only add that
\begin{equation}\label{eq:ortho}
	E(u;f) = E(u;A_Gf) + \frac12 \norm{A_G f-f}_{L^2(\Omega)}^2
\end{equation}
for all $f\in L^2(\Omega)$ and $u \in PCR_G$ and therefore 
\begin{align}\label{eq:equivalencemin}
	u_{A_G f} = \argmin_{u \in PCR_G} E(u;A_G f) = \argmin_{u \in PCR_G} E(u;f).
\end{align}
In the remainder of this section we investigate how well $u_{A_G f}$ approximates $u_f$.
\end{remark}
\begin{theorem}[Error estimates]\label{thm:approximationerror}$ $
	\begin{enumerate}
		\item For every $f \in L^2(\Omega)$
			\begin{align} \label{eq:estimate1}
				\norm*{u_{A_Gf} - u_f }_{L^2} \le \norm*{ A_Gf - f }_{L^2}.
			\end{align}
		\item For every $f \in L^q(\Omega)$, $2 \le q \le \infty$,
			\begin{align} \label{eq:estimate2}
				\norm*{ u_{A_Gf} - u_f}^{2}_{L^2} \le 2^{1+\frac{q'}{q}} J(u_f)^{1-\frac{q'}{q}} \norm*{u_f}_{L^q}^{\frac{q'}{q}} \left(\norm{u_f}_{L^q} + \norm{f}_{L^q} \right) h^{1-\frac{q'}{q}}.
			\end{align}
			If in addition $d=1$, then
			\begin{align} \label{eq:estimate1d}
				\norm*{ u_{A_Gf} - u_f}^{2}_{L^2} \le 2 J(u_f) (\norm{u_f}_{L^q}+\norm{f}_{L^q}) h^{1-\frac{1}{q}}.
			\end{align}
	\end{enumerate}
\end{theorem}
\begin{proof}
	Estimate \cref{eq:estimate1} follows from \cref{thm:prox-nonexpansive}.
	The proof of \cref{eq:estimate2} is similar to \cite[Thm.~6.5]{BarNocSal15} but differs in certain key points. Recalling \cref{thm:convexdiff} (noting that $0\in\partial E(u_f)$), the equivalence \cref{eq:equivalencemin}, the total variation diminishing property (\cref{thm:tvd}) and the nonexpansiveness of $A_G$ (inequality \cref{eq:nonexpansive}) we obtain
	\begin{align}
		\norm{ u_{A_Gf}-u_f}_{L^2}^2
			&\leq	2(E(u_{A_Gf};f)-E(u_f;f)) \leq 2(E(A_Gu_f;f)-E(u_f;f)) \nonumber \\
			&=		\norm{ A_{G}u_f-f}_{L^2}^2 + 2\alpha J(A_Gu_f)-\norm{ u_f -f}_{L^2}^2 - 2\alpha J(u_f) \nonumber \\
			&\leq	\norm{ A_{G}u_f-f}_{L^2}^2-\norm{ u_f -f}_{L^2}^2 = \langle A_{G}u_f-u_f,A_{G}u_f+u_f-2f\rangle \nonumber \\
			&\leq	\norm{ A_{G}u_f-u_f}_{L^{q^{\prime}}}\norm{ A_{G}u_f+u_f-2f}_{L^q} \nonumber \\
			&\leq	2\norm{ A_{G}u_f-u_f}_{L^{q^{\prime}}}(\norm{ u_f}_{L^q}+\norm{ f}_{L^q}). \label{eq:chain}
		\intertext{Now, the interpolation inequality \cref{eq:interpolationineq} as well as \cref{eq:averaging-error-p1} imply}
			&\leq	2\norm{ A_{G}u_f-u_f}_{L^1}^{1-\frac{q'}{q}}\norm{ A_{G}u_f-u_f}_{L^{q}}^{\frac{q'}{q}}(\norm{ u_f}_{L^q}+\norm{ f}_{L^q}) \nonumber \\
			&\leq	2^{1+\frac{q'}{q}}\norm{ A_{G}u_f-u_f}_{L^1}^{1-\frac{q'}{q}}\norm{u_f}_{L^q}^{\frac{q'}{q}}(\norm{u_f}_{L^q}+\norm{ f}_{L^q}) \nonumber \\
			&\leq 2^{1+\frac{q'}{q}} h^{1-\frac{q'}{q}} J(u_f)^{1-\frac{q'}{q}}\norm{u_f}_{L^q}^{\frac{q'}{q}}(\norm{u_f}_{L^q}+\norm{f}_{L^q})  \nonumber 
	\end{align}
	and we have proved \cref{eq:estimate2}.
	
	In dimension $d=1$, we directly estimate \cref{eq:chain} using \cref{eq:averaging-error-d1}. This shows \cref{eq:estimate1d}.
\end{proof}
\begin{remark} \label[remark]{rem:finite-norm}
We stress that the norm $\norm*{ u_f}_{L^q}$ appearing on the right hand side of \cref{eq:estimate2} is bounded by $\norm*{f}_{L^q}$ and therefore finite. See \cref{thm:invgeneral} and \cref{rem:prox-nonexpansive} below.
\end{remark}
\begin{remark}\label[remark]{rem:best-exponent}
In the proof of \cref{eq:estimate2}, first, Hölder's inequality was applied with exponents $q$, $q'$ and then the interpolation inequality \cref{eq:interpolationineq} with $p=1$, $r=q'$, $s = q$. It can be shown that these choices maximize the resulting exponent of $h$ provided that $d\geq 2$. That is, for $d\geq 2$ we cannot increase the exponent beyond $1-q'/q$ within the framework of the present proof.
\end{remark}
The following corollary supplements the convergence rates for the $L^2$ approximation error, which are immediate from \cref{thm:approximationerror}, with rates for the total variations $J(u_{A_{G}f})$ and the discrete energies $E(u_{A_{G}f};A_{G}f)$, recall \cref{eq:discrete-energy}.

\begin{corollary}[Convergence rates] \label[corollary]{thm:rate}
Let $(G_k)$ be a sequence of grids such that $h(G_k) \to 0$ as $k \to \infty$. Then the following statements hold.
\begin{enumerate}
	\item If $f \in L^2(\Omega)$, then
		\begin{subequations} \label{eq:rate1}
		\begin{gather}
			\norm[\big]{ u_{A_{G_k}f} - u_f}_{L^2} \to 0, \label{eq:rate1-l2} \\
			J(u_{A_{G_k}f}) \to J(u_f), \label{eq:rate1-J} \\
			E(u_{A_{G_k}f};A_{G_k}f) \to E(u_f;f). \label{eq:rate1-E} 
		\end{gather}
		\end{subequations}
	\item \label{it:rate} If $f \in L^q(\Omega)$, $2 \le q \le \infty$, then
		\begin{subequations}\label{eq:rate2}
			\begin{gather}
				\norm[\big]{ u_{A_{G_k}f} - u_f}_{L^2} = \mathcal{O}\big(h(G_k)^{\frac12 - \frac{q'}{2q}}\big), \label{eq:rate2-l2} \\
				\abs[\big]{ J(u_{A_{G_k}f}) - J(u_f)} = \mathcal{O}\big(h(G_k)^{\frac12 - \frac{q'}{2q}}\big). \label{eq:rate2-J}
			\end{gather}
		\end{subequations}
		If in addition $d=1$, then
		\begin{subequations}\label{eq:rate1d}
			\begin{gather}
				\norm[\big]{ u_{A_{G_k}f} - u_f}_{L^2} = \mathcal{O}\big(h(G_k)^{\frac12 - \frac{1}{2q}}\big), \label{eq:rate1d-l2} \\
				\abs[\big]{ J(u_{A_{G_k}f}) - J(u_f)} = \mathcal{O}\big(h(G_k)^{\frac12 - \frac{1}{2q}}\big). \label{eq:rate1d-J}
			\end{gather}
		\end{subequations}
	\item If $f \in L^q(\Omega) \cap BV(\Omega)$, $2 \le q \le \infty$, then
		\begin{align} \label{eq:rate3}
			\abs{E(u_{A_{G_k}f};A_{G_k}f) - E(u_f;f)} = \mathcal{O}\big(h(G_k)^{1 - \frac{q'}{q}}\big).
		\end{align}
		If in addition $d=1$, then
		\begin{align} \label{eq:rate3-1d}
			\abs{E(u_{A_{G_k}f};A_{G_k}f) - E(u_f;f)} = \mathcal{O}\big(h(G_k)^{1 - \frac{1}{q}}\big).
		\end{align}
\end{enumerate}
\end{corollary}
\begin{proof}
The first claim, \cref{eq:rate1-l2}, follows from \cref{eq:estimate1}, because $h(G_k)\to 0$ implies $D(G_k)\to 0$ and therefore $A_{G_k}f \to f$ in $L^2(\Omega)$ as \cref{thm:convergenceaverage} shows. Next, note that
\begin{equation} \label{eq:E}
	E(u_f;f)\leq E(u_{A_{G_k}f};f)\leq E(A_{G_k}u_f;f) \to E(u_f;f),
\end{equation}
where the second inequality is due to \cref{rem:uAf}, the convergence $\norm{A_{G_k}u_f - f}_{L^2}^2 \to \norm{u_f - f}_{L^2}^2$ follows from \cref{thm:convergenceaverage}, and $J(A_{G_k}u_f) \to J(u_f)$ holds because of the lower semicontinuity of $J$ and the total variation diminishing property of $A_{G_k}$. Since we also have $\norm{u_{A_{G_k}f}-f}_{L^2}^2 \to \norm{u_{f}-f}_{L^2}^2$, \cref{eq:E} implies \cref{eq:rate1-J} and in further consequence \cref{eq:rate1-E}.

The rates \cref{eq:rate2-l2} and \cref{eq:rate1d-l2} are immediate consequences of the error estimates \cref{eq:estimate2} and \cref{eq:estimate1d}, respectively, taking into account \cref{rem:finite-norm}. Regarding \cref{eq:rate2-J} we estimate the difference of data terms using \cref{eq:l2difference} and the Cauchy-Schwarz inequality
\begin{align*} 
    \abs*{\frac12 \norm{u_{A_{G_k}f} - f}_{L^2}^2 - \frac12 \norm{u_f - f}_{L^2}^2}
    	&\le \frac12 \norm{u_{A_{G_k}f} - u_f}^2_{L^2} + \abs{\langle u_{A_{G_k}f} - u_f,u_f-f\rangle} \\
    	&\le \frac{B}{2} + \norm{u_f-f}_{L^2} \sqrt{B},
\end{align*}
where $B = B(h(G_k))$ stands for the right-hand side in \cref{eq:estimate2}. Now the inequality
\begin{align}\label{eq:energy-difference}
	0 \le E(u_{A_{G_k}f};f) - E(u_f;f) \le \frac{B}{2},
\end{align}
which is a consequence of the proof of \cref{thm:approximationerror}, implies
\begin{align*}
	-\frac{B}{2} - \norm{u_f-f}_{L^2} \sqrt{B}\le \alpha J(u_{A_{G_k}f}) - \alpha J(u_f) \le B + \norm{u_f-f}_{L^2}  \sqrt{B}
\end{align*}
and \cref{eq:rate2-J} follows. The same argument works for the one-dimensional result \cref{eq:rate1d-J} with $B$ now being the right-hand side in \cref{eq:estimate1d}.

Turning to \cref{eq:rate3} we combine \cref{eq:energy-difference} with \cref{eq:ortho} to obtain
\begin{align}\label{eq:energy-difference2}
	-\frac12 \norm[\big]{A_{G_k}f - f}^2_{L^2} \le E(u_{A_{G_k}f};A_{G_k}f) - E(u_f;f) \le \frac{B}{2}.
\end{align}
The first term in this double inequality can be estimated using \cref{eq:interpolationineq} and \cref{thm:rateconvaveraging} yielding
\begin{align*}
	\norm[\big]{A_{G_k}f - f}_{L^2}
		&\le \norm[\big]{A_{G_k}f - f}^{t}_{L^1} \norm[\big]{A_{G_k}f - f}^{1-t}_{L^q} \\
		&\le \left( C_d h J(f) \right)^t \norm[\big]{A_{G_k}f - f}^{1-t}_{L^q} = \mathcal{O}(h^t).
\end{align*}
Observing that $t = \frac{q-2}{2(q-1)} = \frac12 - \frac{q'}{2q}$ proves \cref{eq:rate3}. In dimension $d=1$ \cref{eq:energy-difference2} holds with $B$ being the right-hand side in \cref{eq:estimate1d}. Moreover, we have $\norm{A_{G_k}f - f}^2_{L^2} = \mathcal{O}(h)$ directly from \cref{eq:averaging-error-d1}. Hence \cref{eq:rate3-1d} holds.
\end{proof}
\begin{remark}
Regarding $L^2$ approximation, the rate of $\mathcal{O}\big(h^{\sfrac12}\big)$ for $f \in L^{\infty}(\Omega)$ is optimal in every dimension. We show this by providing a specific datum $f$ and a sequence $(G_k)$ such that $u_{A_{G_k}f}$ converges exactly at the rate $h^{\sfrac12}$. Let $\Omega = (0,1)^d$, $E = \{ x \in \Omega \mid x_d < \sfrac12 \}$ and $f = \mathbf{1}_E$. We know from \cref{thm:fpcrupcr} that $u_f$ is of the form $u_f(x) = a$ for $x \in E$ and $u_f(x) = b$ otherwise, for two constants $a, b$. Moreover, we can choose $\alpha$ such that $a \neq b$. Consider a sequence of Cartesian grids $(G_k)$, each with mesh size $h = h(G_k) = \sfrac{1}{k}$, that is, every $R \in \mathcal{Q}(G_k)$ is a translate of $h\Omega$. If $k$ is odd, then $A_{G_k} u_f$ differs from $u_f$ on exactly $k^{d-1}$ hypercubes and a brief calculation shows
\begin{align*}
    \| u_{A_{G_k} f} - u_f \|^2_{L^2} \ge \| A_{G_k} u_f - u_f \|^2_{L^2} 
        = c k^{d-1} h^d = ch
\end{align*}
for a $c>0$. Combining this with \cref{eq:rate2-l2}, for $q=\infty$, shows that $\norm{ u_{A_{G_k}f} - u_f}_{L^2}$ goes to zero exactly at the rate $h^{\sfrac12}$. In fact, this example implies that the rate $h^{\sfrac{1}{2}}$ for the $L^2$ approximation error is optimal also for $f\in L^{\infty}(\Omega)\cap BV(\Omega)$.
\end{remark}
\section{Invariant $\varphi$-minimal sets} \label{sec:phi}

We recall the notion of invariant $\varphi$-minimal subsets of $\mathbb{R}^{n}$ as introduced in \cite{Kruglyak2}. 
\begin{definition}
 A set $M\subset\mathbb{R}^{n}$ is called \emph{invariant $\varphi$-minimal} if for any $a\in\mathbb{R}^{n}$ there exists an element $x_{a}\in M$ such that
 \begin{align} \label{phifunctional}
  \sum^{n}_{i=1}\varphi\left(a_{i}-x_{a,i}\right)\leq\sum^{n}_{i=1}\varphi\left(a_{i}-x_{i}\right)
 \end{align}
holds for all $x\in M$ and all convex functions $\varphi:\mathbb{R}\rightarrow\mathbb{R}$.
\end{definition}
For our purposes the following result, which shows that every invariant $\varphi$-minimal set in addition fulfils a weighted version of inequality \cref{phifunctional}, will be useful.
\begin{theorem} \label{prop:invphilambdaminimal}
 Let $M\subset\mathbb{R}^{n}$ be a bounded, closed and convex set that is invariant $\varphi$-minimal. Then for any $a\in\mathbb{R}^{n}$ and any $w\in\mathbb{R}^{n}_{>0}$ there exists an element $x_{a,w}\in M$ such that
 \begin{align*}
  \sum^{n}_{i=1}w_{i}\varphi\left(\frac{a_{i}-x_{a,w,i}}{w_{i}}\right)\leq\sum^{n}_{i=1}w_{i}\varphi\left(\frac{a_{i}-x_{i}}{w_{i}}\right)
 \end{align*}
holds for all $x\in M$ and all convex functions $\varphi:\mathbb{R}\rightarrow\mathbb{R}$.
\end{theorem}
In order to prove \cref{prop:invphilambdaminimal}, we first recall the notion of special cone property which was introduced in \cite{Kruglyak2}.
By $\left\{e_{i}\right\}_{i=1}^{n}$ we denote the standard basis of $\mathbb{R}^{n}$.
\begin{definition} \label[definition]{def:scp}
 Let $M\subset\mathbb{R}^{n}$ be closed and convex. For $x\in M$, consider all vectors $s=e_{i}-e_{j}$ such that $x+\varepsilon s\in M$ for some $\varepsilon>0$. Denote by $S_{x}$ the set of all such vectors at $x$ and let $K_{x}=\{y\in\mathbb{R}^{n}:y=\sum_{s\in S_{x}}\lambda_{s}s,\lambda_{s}\geq 0\}$ be the convex cone generated by these vectors. The set $ M$ is said to have the \emph{special cone property} if $M\subset x+K_{x}$ for each $x\in M$.
\end{definition}
\begin{remark}
 In \cite{Kruglyak2}, vectors of the type $e_{i}$ and $e_{i}+e_{j}$ are also considered in the definition of the special cone property. These vectors are required for the characterization of invariant $K$-minimal sets, a notion related to invariant $\varphi$-minimal sets.
\end{remark}
The following characterization of invariant $\varphi$-minimal sets was established in \cite[Thm. 3.2, Thm. 4.2]{Kruglyak2}.
\begin{theorem} \label {thm:scp}
 A bounded, closed and convex set $M\subset\mathbb{R}^{n}$ is invariant $\varphi$-minimal if and only if it has the special cone property.
\end{theorem}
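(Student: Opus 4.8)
The plan is to prove the two implications separately, using throughout the Hardy--Littlewood--P\'olya characterization of majorization: for $u,v\in\mathbb{R}^{n}$ one has $\sum_{i}\varphi(u_{i})\le\sum_{i}\varphi(v_{i})$ for every convex $\varphi:\mathbb{R}\to\mathbb{R}$ if and only if $u$ is majorized by $v$. In this language, $M$ is invariant $\varphi$-minimal precisely when, for every $a\in\mathbb{R}^{n}$, the set $a-M$ possesses a majorization-least element. Taking $\varphi(t)=\pm t$ shows that all points of $M$ necessarily share the same coordinate sum, and the admissible directions $e_{i}-e_{j}$ entering $S_{x}$ are exactly the sum-preserving transfer directions underlying majorization; this is the structural reason they govern the property. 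In both directions the Euclidean projection $P_{M}(a)=\argmin_{x\in M}\lVert a-x\rVert$, which is well defined and unique because $M$ is compact and convex, serves as the canonical candidate for the common minimizer.

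For the implication ``special cone property $\Rightarrow$ invariant $\varphi$-minimal'' I would fix $a\in\mathbb{R}^{n}$, set $x_{a}=P_{M}(a)$ and $p=a-x_{a}$, and first extract the key ordering property of the residual. The variational inequality for the projection, $\langle p,x-x_{a}\rangle\le 0$ for all $x\in M$, applied to the admissible directions $x_{a}+\varepsilon s\in M$ with $s=e_{i}-e_{j}\in S_{x_{a}}$, yields $p_{i}\le p_{j}$ whenever $e_{i}-e_{j}\in S_{x_{a}}$. Now fix an arbitrary convex $\varphi$ and consider the convex function $\Phi(x)=\sum_{i}\varphi(a_{i}-x_{i})$ on $M$. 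Since $\partial\varphi$ is a nondecreasing multifunction, the inequalities $p_{i}\le p_{j}$ allow me to choose subgradients $d_{i}\in\partial\varphi(p_{i})$ with $d_{i}\le d_{j}$ whenever $e_{i}-e_{j}\in S_{x_{a}}$, so that $g=-d\in\partial\Phi(x_{a})$. Invoking the special cone property to write any $x\in M$ as $x-x_{a}=\sum_{s\in S_{x_{a}}}\lambda_{s}s$ with $\lambda_{s}\ge 0$, I obtain $\langle d,x-x_{a}\rangle=\sum_{s}\lambda_{s}(d_{i}-d_{j})\le 0$, hence $\langle g,x-x_{a}\rangle\ge 0$ for all $x\in M$. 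This is the first-order optimality condition for the convex program $\min_{M}\Phi$, so $x_{a}$ minimizes $\Phi$. As $\varphi$ was arbitrary, the single point $x_{a}$ works for every convex $\varphi$, which is exactly invariant $\varphi$-minimality.

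For the converse I would argue by contraposition. Assume the special cone property fails, so there exist $x_{0}\in M$ and $y\in M$ with $y-x_{0}\notin K_{x_{0}}$. Because $S_{x_{0}}$ is finite, $K_{x_{0}}$ is a closed polyhedral cone, and the separation theorem provides $c\in\mathbb{R}^{n}$ with $\langle c,s\rangle\le 0$ for all $s\in S_{x_{0}}$ and $\langle c,y-x_{0}\rangle>0$. The plan is to turn $c$ into a datum witnessing the failure of invariant $\varphi$-minimality. The crucial reduction is that, for strictly convex $\varphi$ such as $\varphi(t)=t^{2}$, the minimizer of $\sum_{i}\varphi(a_{i}-x_{i})$ over $M$ is unique and equals $P_{M}(a)$; hence if $M$ were invariant $\varphi$-minimal, then $P_{M}(a)$ would have to minimize $\sum_{i}\varphi(a_{i}-x_{i})$ simultaneously for \emph{every} convex $\varphi$. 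I would therefore choose a datum $a$ with $P_{M}(a)=x_{0}$, i.e.\ with residual $p=a-x_{0}$ in the normal cone $N_{M}(x_{0})$, together with a convex $\varphi$ whose subgradient at $p$ reproduces the separator, $\varphi'(p)=c$. Then the directional derivative of $x\mapsto\sum_{i}\varphi(a_{i}-x_{i})$ at $x_{0}$ in the direction $y-x_{0}$ equals $-\langle c,y-x_{0}\rangle<0$, so moving from $x_{0}$ toward $y\in M$ strictly decreases the objective; thus $x_{0}=P_{M}(a)$ fails to minimize it, and $M$ cannot be invariant $\varphi$-minimal.

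The main obstacle lies in this last construction: realizing the separator as a subgradient, $c=\varphi'(p)$, forces $c$ to be comonotone with the residual $p$, whereas $p$ is constrained to the normal cone $N_{M}(x_{0})$ and in general $c\notin N_{M}(x_{0})$ (indeed $\langle c,y-x_{0}\rangle>0$ with $y\in M$ rules this out). The technical heart is therefore to exhibit some $p\in N_{M}(x_{0})$ comonotone with an admissible separator; here one uses the freedom to choose $c$ within the open cone of separators, the inclusion $N_{M}(x_{0})\subset K_{x_{0}}^{\circ}$, and the finite polyhedral structure of $K_{x_{0}}$ to align the two orderings, possibly after a perturbation preserving the strict inequality $\langle c,y-x_{0}\rangle>0$. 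This alignment argument is the substantive content established in \cite{Kruglyak2}, and I would expect to spend the bulk of the effort there.
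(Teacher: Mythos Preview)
The paper does not prove Theorem~\ref{thm:scp}; it is quoted from \cite[Thm.~3.2, Thm.~4.2]{Kruglyak2} and used as a black box in the proof of Proposition~\ref{prop:invphilambdaminimal}. So there is no ``paper's own proof'' to compare against, and your proposal has to be judged on its merits.

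Your argument for ``special cone property $\Rightarrow$ invariant $\varphi$-minimal'' is correct and complete. Setting $x_a=P_M(a)$, the variational inequality $\langle a-x_a,s\rangle\le 0$ for $s\in S_{x_a}$ yields $p_i\le p_j$ whenever $e_i-e_j\in S_{x_a}$; choosing $d_i=\varphi'_{+}(p_i)$ (right derivative) gives a subgradient $-d\in\partial\Phi(x_a)$ with $\langle d,s\rangle\le 0$ for every $s\in S_{x_a}$, and the cone decomposition $x-x_a=\sum_s\lambda_s s$ then gives the first-order optimality condition. Incidentally, this is the same mechanism the paper exploits in its proof of Proposition~\ref{prop:invphilambdaminimal}, though run in the opposite direction: there one starts from a $\varphi$-minimizer (with a quadratic tie-break) and shows it must be the weighted projection, whereas you start from the projection and show it minimizes every $\varphi$-functional.

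For the converse you have correctly identified the obstruction but not overcome it. You need a residual $p\in N_M(x_0)$ and a separator $c$ of $y-x_0$ from $K_{x_0}$ that are comonotone (so that $c=\varphi'(p)$ for some convex $\varphi$), and you observe that $c\notin N_M(x_0)$ in general, so one cannot simply take $p=c$. Your closing paragraph defers this alignment step to \cite{Kruglyak2}; that is honest, but it means the converse remains a plan rather than a proof. The actual construction in \cite{Kruglyak2} is indeed the substantive part, so if you want a self-contained argument you still owe the existence of such a comonotone pair $(p,c)$.
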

The next lemma will turn out to be useful for proving \cref{prop:invphilambdaminimal}.
\begin{lemma} \label[lemma]{lemma:linesegment}
 Given $s=e_{k}-e_{l}$ and $b\in\mathbb{R}^{n}$, consider the closed line segment
 \begin{align*}
  L_{c,d}=\left\{x\in\mathbb{R}^{n} \,\middle|\, x=b+ts,t\in\left[c,d\right]\subset\mathbb{R}\right\}.
 \end{align*}
For every $a\in\mathbb{R}^{n}$ and $w\in\mathbb{R}^n_{>0}$ there exists an element $x_{a,w}\in L_{c,d}$ such that
\begin{align*}
  \sum^{n}_{i=1}w_{i}\varphi\left(\frac{a_{i}-x_{a,w,i}}{w_{i}}\right)\leq\sum^{n}_{i=1}w_{i}\varphi\left(\frac{a_{i}-x_{i}}{w_{i}}\right)
\end{align*}
 holds for all $x\in L_{c,d}$ and every convex function $\varphi:\mathbb{R}\rightarrow\mathbb{R}$.
\end{lemma}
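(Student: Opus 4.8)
The plan is to restrict the problem to the line segment, where it becomes a genuinely one-dimensional minimization in two of the coordinates, and then to locate a minimizing parameter that does \emph{not} depend on $\varphi$. Concretely, since $s = e_k - e_l$, a point $x = b + ts \in L_{c,d}$ has $x_i = b_i$ for $i \notin \{k,l\}$ while $x_k = b_k + t$ and $x_l = b_l - t$. Writing $p = a_k - b_k$, $q = a_l - b_l$ and absorbing the terms with $i \notin \{k,l\}$ (which are constant in $t$) into a constant $C$, the objective on $L_{c,d}$ equals
\[
	C + g(t), \qquad g(t) = w_k\,\varphi\!\left(\frac{p-t}{w_k}\right) + w_l\,\varphi\!\left(\frac{q+t}{w_l}\right), \qquad t \in [c,d],
\]
so it suffices to produce a single $\bar t \in [c,d]$, chosen independently of $\varphi$, with $g(\bar t) \le g(t)$ for all $t \in [c,d]$ and all convex $\varphi$.

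The key observation is that the two arguments of $\varphi$ in $g$, namely the affine functions $u_k(t) = (p-t)/w_k$ and $u_l(t) = (q+t)/w_l$, satisfy $w_k u_k(t) + w_l u_l(t) = p+q$ for every $t$, i.e. their $(w_k,w_l)$-weighted average is the constant $r := (p+q)/(w_k+w_l)$. Let $t^* := (w_l p - w_k q)/(w_k+w_l)$ be the unique parameter with $u_k(t^*) = u_l(t^*) = r$; it depends only on $a$, $b$, $w$. I would then show that for \emph{every} convex $\varphi$ the function $g$ is nonincreasing on $(-\infty, t^*]$ and nondecreasing on $[t^*, \infty)$. For $t^* \le t_1 < t_2$ one has the ordering $u_k(t_2) < u_k(t_1) \le r \le u_l(t_1) < u_l(t_2)$ together with $u_k(t_1) - u_k(t_2) = (t_2-t_1)/w_k$ and $u_l(t_2) - u_l(t_1) = (t_2-t_1)/w_l$; applying monotonicity of difference quotients of the convex function $\varphi$ to these four ordered points and clearing the (positive) denominators gives $w_k(\varphi(u_k(t_1)) - \varphi(u_k(t_2))) \le w_l(\varphi(u_l(t_2)) - \varphi(u_l(t_1)))$, which is precisely $g(t_1) \le g(t_2)$; the interval $(-\infty,t^*]$ is handled symmetrically.

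To conclude I would let $\bar t$ be the projection of $t^*$ onto $[c,d]$ (that is, $\bar t = t^*$, $c$, or $d$ according to whether $t^*$ lies in, below, or above $[c,d]$); by the monotonicity just established, $g(\bar t) \le g(t)$ for all $t \in [c,d]$, and this holds for every convex $\varphi$ since $\bar t$ was selected without reference to $\varphi$. Setting $x_{a,w} = b + \bar t\, s \in L_{c,d}$ finishes the argument. The step I expect to require the most care is exactly this universality: one fixed point $\bar t$ must beat all competitors for \emph{all} convex $\varphi$ at once, non-differentiable ones included. This is why the argument is routed through the $\varphi$-independent turning point $t^*$ and the difference-quotient inequality (valid for every convex function) rather than through $g'$ or a $\varphi$-dependent minimizer. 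I would also remark in passing that $L_{c,d}$ trivially has the special cone property of Definition~\ref{def:scp}, but invoking Theorem~\ref{thm:scp} only yields the \emph{unweighted} inequality, so the quantitative computation above appears unavoidable for the weighted statement.
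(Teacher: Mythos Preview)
Your proposal is correct and follows essentially the same route as the paper. You identify the same $\varphi$-independent turning point (your $t^*$ coincides with the paper's $t_{a,w}=\frac{(a_k-b_k)w_l-(a_l-b_l)w_k}{w_k+w_l}$), establish that $g$ is nonincreasing to the left of it and nondecreasing to the right, and then project onto $[c,d]$. The only cosmetic difference is that the paper obtains the minimum on the full line via Jensen's inequality (using exactly your observation that the weighted average of the two arguments is constant) and then reads off the monotonicity from the convexity of $g$ in $t$, whereas you prove the monotonicity directly with the difference-quotient inequality; both arguments are equivalent and equally valid.
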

\begin{proof}
 Let $a\in\mathbb{R}^{n}$ and $w\in\mathbb{R}^{n}_{>0}$ be given. Consider an arbitrary convex function $\varphi:\mathbb{R}\rightarrow\mathbb{R}$ and let
 \begin{align*} 
  \psi_{a,w}(x)=\sum^{n}_{i=1}w_{i}\varphi\left(\frac{a_{i}-x_{i}}{w_{i}}\right).
 \end{align*}
 It is straightforward to show, using Jensen's inequality, that $\psi_{a,w}$ is minimized on the line $L=\left\{x\in\mathbb{R}^{n}:x=b+ts,t\in\mathbb{R}\right\}$ by $x_{a,w}=b+t_{a,w}s$ where 
 \begin{align*}
 t_{a,w}=\frac{(a_{k}-b_{k})w_{l}-(a_{l}-b_{l})w_{k}}{w_{k}+w_{l}}.
 \end{align*}
Note that $\varphi$ does not influence $t_{a,w}$ and $x_{a,w}$. On $L$, $\psi_{a,w}$ may be viewed as a function of the parameter $t\in\mathbb{R}$ and as such it is decreasing on $\left(-\infty,t_{a,w}\right]$ and increasing on $\left[t_{a,w},\infty\right)$. Therefore, when restricted to the closed segment $L_{c,d}$ of $L$, $\psi_{a,w}$ has a minimum at $x_{a,w}=b+cs$ if $t_{a,w}<c$, at $x_{a,w}=b+t_{a,w}s$ if $c\leq t_{a,w}\leq d$, and at $x_{a,w}=b+ds$ if $t_{a,w}>d$. In all cases, $x_{a,w}$ does not depend on $\varphi$.
\end{proof}
We are now ready to prove \cref{prop:invphilambdaminimal}. The structure of the proof is similar to the one of \cite[Thm. 3.1]{Kruglyak2}.
\begin{proof}[Proof of \cref{prop:invphilambdaminimal}]
Let $a\in\mathbb{R}^{n}$ and $w\in\mathbb{R}^{n}_{>0}$ be given and fix a convex function $\varphi:\mathbb{R}\rightarrow\mathbb{R}$. As in the proof of \cref{lemma:linesegment}, let
 \begin{align} \label{eq:psi2}
  \psi_{a,w}(x)=\sum^{n}_{i=1}w_{i}\varphi\left(\frac{a_{i}-x_{i}}{w_{i}}\right).
 \end{align}
Since $\psi_{a,w}$ is continuous on $M$, it attains a minimum there. 
Let $M_{a,w}$ denote the set of all minimizers of $\psi_{a,w}$ in $M$, i.e.
\begin{align*}
	M_{a,w} = \argmin_{x\in M} \psi_{a,w}(x).
\end{align*}
Note that $M_{a,w}$ is a closed and convex subset of $M$. Let $x_{a,w}$ denote the element in $M_{a,w}$ which satisfies
\begin{align} \label{eq:squarefunction}
\sum^{n}_{i=1}w_{i}\left(\frac{a_{i}-x_{a,w,i}}{w_{i}}\right)^{2}=\min_{x\in M_{a,w}}\sum^{n}_{i=1}w_{i}\left(\frac{a_{i}-x_{i}}{w_{i}}\right)^{2}.
\end{align}
Uniqueness of $x_{a,w}$ follows from the strict convexity of $x \mapsto \sum^{n}_{i=1}w_{i}\big(\frac{a_{i}-x_{i}}{w_{i}}\big)^{2}$. Equivalently, \cref{eq:squarefunction} can be formulated as
\begin{align*}
\lVert a-x_{a,w}\rVert_{2,1/w}=\underset{x\in M_{a,w}}{\min}\lVert a-x\rVert_{2,1/w}
\end{align*}
where
\begin{align*}
\lVert x\rVert_{2,1/w}=\sqrt{\langle x,x\rangle_{1/w}}
\end{align*}
is the norm associated to the weighted inner product on $\mathbb{R}^{n}$ given by
\begin{align*}
\langle x,y\rangle_{1/w}=\sum^{n}_{i=1}x_{i}y_{i}\frac{1}{w_{i}}.
\end{align*}
The remaining part of the proof is devoted to showing that $x_{a,w}$ is the element of best approximation in $M$ of $a$ with respect to $\lVert\cdot\rVert_{2,1/w}$. As this element is unique, it then follows that $x_{a,w}$ is independent of the specific $\varphi$ and minimizes \cref{eq:psi2} in $M$ for all convex functions.
\par Now, $M$ is a set in the class characterized by \cref{thm:scp} and has therefore the special cone property. Take a vector $s\in S_{x_{a,w}}$ and recall that $x_{a,w}+\varepsilon s\in M$ for some $\varepsilon>0$. The convexity of $M$ gives that the entire line segment $L_{0,\varepsilon}=\left\{x:x=x_{a,w}+ts,t\in\left[0,\varepsilon\right]\right\}$ is in $M$. By construction, $x_{a,w}$ minimizes \cref{eq:psi2} on $L_{0,\varepsilon}$. Further, if there are several minimizers of \cref{eq:psi2} on $L_{0,\varepsilon}$, $x_{a,w}$ has the smallest $\lVert\cdot\rVert_{2,1/w}$-norm among them. Taking into account these two properties of $x_{a,w}$, \cref{lemma:linesegment} gives that $x_{a,w}$ minimizes $\lVert\cdot\rVert_{2,1/w}$ on $L_{0,\varepsilon}$. We can then derive
\begin{align*}
\lVert a-x_{a,w}\rVert^{2}_{2,1/w}&\leq\lVert a-(x_{a,w}+ts)\rVert^{2}_{2,1/w}\\
&=\lVert a-x_{a,w}\rVert^{2}_{2,1/w}-2t\langle a-x_{a,w},s\rangle_{1/w}+t^{2}\langle s,s\rangle_{1/w}
\end{align*}
for $t\in\left[0,\varepsilon\right]$. Therefore, for $t$ in this interval,
\begin{align*}
 -2t\langle a-x_{a,w},s\rangle_{1/w}+t^{2}\langle s,s\rangle_{1/w}\geq 0
\end{align*}
which in turn gives that
\begin{align} \label{estinner}
 \langle a-x_{a,w},s\rangle_{1/w}\leq 0.
\end{align}
\par Consider now a general element $x\in x_{a,w}+K_{x_{a,w}}$, i.e.\ $x=x_{a,w}+\sum_{s\in S_{x_{a,w}}}\lambda_{s}s$ where $\lambda_{s}\geq 0$ (recall \cref{def:scp}). We have
\begin{align*}
	\norm{a-x}^{2}_{2,1/w}
		&=	\norm[\Big]{ a- \Big( x_{a,w} + \sum_{\mathclap{s\in S_{x_{a,w}}}}\lambda_{s}s \Big)}^{2}_{2,1/w}\\
 		&=	\norm{a-x_{a,w}}^{2}_{2,1/w} - 2\sum_{\mathclap{s\in S_{x_{a,w}}}}\lambda_{s}\langle a-x_{a,w},s\rangle_{1/w} + \sum_{\mathclap{s,r\in S_{x_{a,w}}}}\lambda_{s}\lambda_{r}\langle s,r\rangle_{1/w}.
\end{align*}
The inequality \cref{estinner} implies that
\begin{align} \label{estinner2}
	\sum_{\mathclap{s\in S_{x_{a,w}}}}\lambda_{s}\langle a-x_{a,w},s\rangle_{1/w}\leq 0.
\end{align}
Further, 
\begin{align} \label{estinner3}
	\sum_{\mathclap{s,r\in S_{x_{a,w}}}}\lambda_{s}\lambda_{r}\langle s,r\rangle_{1/w}\geq 0
\end{align}
as $\left(\langle s,r\rangle_{1/w}\right)\in\mathbb{R}^{|S_{x_{a,w}}|\times |S_{x_{a,w}}|}$ is a Gram matrix and therefore positive semidefinite. From \cref{estinner2} and \cref{estinner3}, we get the estimate
\begin{align} \label{eq:l2w}
 \lVert a-x_{a,w}\rVert_{2,1/w}\leq\lVert a-x\rVert_{2,1/w}.
\end{align}
In view of \cref{eq:l2w}, which holds for any $x\in x_{a,w}+K_{x_{a,w}}$, and $M\subset x_{a,w}+K_{x_{a,w}}$, the element $x_{a,w}$ turns out to be the unique best approximation in $M$ of $a$ with respect to $\lVert\cdot\rVert_{2,1/w}$. As $\varphi$ was arbitrarily chosen, $x_{a,w}$ minimizes \cref{eq:psi2} in $M$ given any choice of convex function.
\end{proof}

\section{The ROF model on weighted graphs} \label{sec:rof}

In this section we introduce a discrete formulation of the ROF model that includes, as special cases, the finite-dimensional instances of \cref{eq:ROF} for $f \in PCR_G$; recall \cref{rem:uAf}. If $G$ partitions $\Omega$ into hypercubes, the minimizer enjoys a discrete universal minimality property according to \cite[Thm.\ 3.2]{KirSchSet19a}. \Cref{thm:rofinvariancegraph} below implies that this statement extends to arbitrary rectilinear grids.

Similar ROF-type energies on graphs have been considered, for instance, in \cite{BouElmMel09,ChaOshShe01,CouGraNajPesTal13,GraPol10,PeyBouCoh08,ZhoScho05}. In these works \enquote{isotropic} definitions of the total variation are prevalent, where local variation is measured by the Euclidean norm of the gradient. Note, however, that the subdifferentials of such discrete total variation functionals are generally not invariant $\varphi$-minimal \cite{KirSchSet19a}. On the other hand, the use of the $\ell^1$ norm instead of the Euclidean one is common in graph cuts approaches to the ROF problem, see e.g.\ \cite{Cha05,ChaDar09,DarSig06,GolYin09,Hoc01}.

Let $(\sV,\sE)$ be a finite, oriented, connected graph. That is, (i) $\sV = \{\mathsf{v_1},\ldots,\mathsf{v_n}\}$ and $\sE\subset \sV\times \sV$, (ii) if $(\sv_\si,\sv_\sj)\in \sE$, then $(\sv_\sj,\sv_\si) \notin \sE$ must hold, and (iii) for every pair of vertices there is path that connects them ignoring edge orientation.

Denote by $\mathbb{R}^{\sV}$ and $\mathbb{R}^{\sE}$ the spaces of all real-valued functions defined on $\sV$ and $\sE$, respectively. We endow both $\sV$ and $\sE$ with positive weight functions $\sw\in\mathbb{R}^{\mathsf{V}}_{>0}$ and $\sW\in\mathbb{R}^{\sE}_{>0}$. We also endow $\mathbb{R}^{\sV}$ with the following weighted inner product and weighted $\ell^p$ norms
\begin{align*}
	\langle \mathsf{g}, \mathsf{h} \rangle_{\sw} &= \sum_{\mathsf{v \in V}} \mathsf{w(v) g(v) h(v)}, \\
	\lVert \mathsf{g}\rVert^p_{p,\sw} &= \sum_{\sv \in \sV}\sw(\sv)\left|\mathsf{g}(\sv)\right|^{p}, \quad 1\leq p<\infty.
\end{align*}
\begin{definition}\label[definition]{def:wdiv}
The \emph{divergence} $\sfdivergence:\mathbb{R}^{\sE} \to \mathbb{R}^{\sV}$ on the weighted graph is given by
\begin{equation*}
	\sfdivergence\sH(\sv) = \frac{1}{\sw(\sv)} \Bigg( \smashoperator[r]{\sum_{\se=(\bar{\sv},\sv)\in \sE}}\sW(\se)\sH(\se)-\sum_{\mathclap{\se=(\sv,\bar{\sv})\in \sE}}\sW(\se)\sH(\se) \Bigg), \quad \sH\in \mathbb{R}^\sE.
\end{equation*}
\end{definition}
In analogy to \cref{eq:B} we define $\sB = \{\sH  \in \mathbb{R}^{\sE} \mid  -1 \le \sH \le 1 \}$.
\begin{definition}\label[definition]{def:wtv}
The \emph{total variation} $\sJ:\mathbb{R}^\sV \to \mathbb{R}$ on the weighted graph is defined as
\begin{equation*}
	\sJ(\su) = \sigma_{\sfdivergence(\sB)}(\su) = \sup_{\mathsf{h}\in\sfdivergence(\sB)} \langle \su, \mathsf{h} \rangle_{\sw}
\end{equation*}
where $\sfdivergence(\mathsf{B})$ denotes the image of $\mathsf{B}$ under the weighted divergence operator.
\end{definition}
The next lemma shows that $\sJ$ is independent of both vertex weights and edge orientation.
\begin{lemma}\label[lemma]{thm:Jnosup}
For every $\su \in \mathbb{R}^{\sV}$
\begin{align}
	\sJ(\su)
		&= \sum_{\mathclap{(\sv_\si,\sv_\sj)\in \sE}} \sW(\sv_\si,\sv_\sj)|\su(\sv_\si) - \su(\sv_\sj)|. \label{eq:Jnosup2}
\end{align}
\end{lemma}
\begin{proof}
We use \cref{def:wdiv,def:wtv} and rewrite 
\begin{align}\label{eq:STV}
	\sJ(\su)
		&=	\sup_{\sH\in\mathsf{B}} \sum_{\sv \in \sV} \sw(\sv) \su(\sv) (\sfdivergence\sH)(\sv) \notag \\
 		&=	\sup_{\sH\in\mathsf{B}} \smashoperator[r]{\sum_{(\sv_\si,\sv_\sj)\in \sE}} \sW(\sv_\si,\sv_\sj) (\su(\sv_\sj)-\su(\sv_\si))\sH(\sv_\si,\sv_\sj).
\end{align}
The supremum in \cref{eq:STV} is attained for $\sH\in\mathsf{B}$ if and only if
\begin{align*}
	\sH(\mathsf{v_{i}},\mathsf{v_{j}}) \in
		\left\{
		\begin{array}{lr}
			\left\{1\right\},	& \su(\sv_\si)<\su(\sv_\sj),\\
			\left[-1,1\right],	& \su(\sv_\si)=\su(\sv_\sj),\\
			\left\{-1\right\},	& \su(\sv_\si)>\su(\sv_\sj),
		\end{array}
		\right.
\end{align*}
giving \cref{eq:Jnosup2}.
\end{proof}
The analogue of problem \cref{eq:ROF} on the weighted graph is
\begin{align} \label{eq:ROFgraph}
	\minimize_{\su\in\mathbb{R}^{\sV}} \quad  \frac{1}{2} \left\| \sff-\su \right\|^{2}_{2,\sw} + \alpha \mathsf{J}(\su), \tag{P$'$}
\end{align}
where $\sff\in\mathbb{R}^{\sV}$ and $\alpha>0$ are given. Problem \cref{eq:ROFgraph} is also well-posed due to the results given in \cref{sec:convex} and we set
\begin{align*}
	\mathsf{u_f} = \argmin_{\su \in \mathbb{R}^{\sV}}  \left\{ \frac{1}{2} \left\| \sff-\su \right\|^{2}_{2,\sw} + \alpha \mathsf{J}(\su) \right\}.
\end{align*}
Moreover, we know from \cref{thm:support-fct,thm:dual2} that 
\begin{align}\label{eq:graph-dual}
	\su_\sff = \argmin_{\su\in \sff - \alpha \partial \sJ(\mathsf{0})} \| \su \|_{2,\sw}.
\end{align}
Note that the subdifferential is taken with respect to the weighted inner product $\langle \cdot , \cdot \rangle_{\sw}$.
\begin{theorem}\label{thm:rofinvariancegraph}
For every convex function $\varphi:\mathbb{R}\rightarrow\mathbb{R}$
\begin{align*}
	\su_{\sff} \in \argmin_{\su \in \sff-\alpha \partial \sJ(\mathsf{0})} \sum_{\sv \in \sV}\varphi\left(\su(\sv)\right)\sw(\sv).
\end{align*}
\end{theorem}
\begin{proof}
From \cite[Thm.~2.4, Rem.~2.5]{Kruglyak3} it follows that the bounded, closed and convex set $\alpha\sw\sfdivergence(\mathsf{B}) = \alpha \sw \partial \mathsf{J}(\mathsf{0})$ is invariant $\varphi$-minimal. \Cref{prop:invphilambdaminimal} guarantees the existence of a unique element $\hat{\mathsf{x}}\in\alpha \sf{w} \partial \mathsf{J}(\mathsf{0})$ satisfying
\begin{align*} 
	\hat{\mathsf{x}} \in \argmin_{\mathsf{x}\in\alpha \sw \partial \mathsf{J}(\mathsf{0})}\sum_{\sv \in \sV}\sw(\sv)\varphi\left(\frac{\sw(\sv)\sff(\sv)-\mathsf{x}(\sv)}{\sw(\sv)}\right)
\end{align*}
for every convex function $\varphi:\mathbb{R}\rightarrow\mathbb{R}$. Equivalently, $\hat{\mathsf{y}} = \hat{\mathsf{x}}/\sw$ is the unique element in $\alpha \partial \mathsf{J}(\mathsf{0})$ such that
\begin{align} \label{eq:invphiweight}
	\hat{\mathsf{y}} \in \argmin_{\mathsf{y}\in\alpha \partial \mathsf{J}(\mathsf{0})}\sum_{\sv \in \sV}\sw(\sv)\varphi\left(\sff(\sv)-\mathsf{y}(\sv)\right)
\end{align}
for every convex function $\varphi:\mathbb{R}\rightarrow\mathbb{R}$. Comparing \cref{eq:invphiweight}, for $\varphi(t)=t^{2}$, with \cref{eq:graph-dual} then gives $\sff-\hat{\mathsf{y}}=\su_{\sff}$ and the result follows.
\end{proof}
\begin{remark}
\Cref{thm:rofinvariancegraph} implies in particular that $\su_{\sff}$ minimizes $\|\cdot\|_{p,\sw}$ for all $p \in [1,\infty)$ over the set $\sff-\alpha \partial \mathsf{J}(\mathsf{0})$. Observing that $\|\su\|_{p,\sw} \to \|\su\|_{\infty}$ as $p\to \infty$, it follows that $\su_{\sff}$ also minimizes $\|\cdot\|_{\infty}$ over this set.
\end{remark}
\section{Universal minimality of the ROF minimizer}\label{sec:main}
In order to translate \cref{thm:rofinvariancegraph} to the continuous setting we first construct a weighted graph so that problem \cref{eq:ROFgraph} is equivalent to \cref{eq:ROF} whenever $f \in PCR_G$.
\begin{definition}\label[definition]{def:graph}
Given a hyperrectangle $\Omega \subset \mathbb{R}^d$ and a grid $G$ on $\Omega$ we define the graph $(\sV_G,\sE_G)$ by identifying $\sV_G$ with the partition $\mathcal{Q}(G)$ and $\sE_G$ with
\begin{equation}\label{eq:sides}
	\mathcal{S}(G) = \left\{S_{ij} = \overline{R_i} \cap \overline{R_j} \, \middle| \, R_i,R_j \in \mathcal{Q}(G), H^{d-1}(S_{ij})>0 \right\}.
\end{equation}
That is, there is a vertex $\sv_\si$ for every hyperrectangle $R_i$ as well as one arbitrarily oriented edge $(\sv_\si,\sv_\sj)$ for every pair of hyperrectangles $(R_i,R_j)$ sharing a joint side. The weight functions $\sw\in \mathbb{R}^{\sV_G}_{>0}$ and $\sW\in \mathbb{R}^{\sE_G}_{>0}$ associated to the graph $(\sV_G,\sE_G)$ are defined by
\begin{equation}\label{eq:weights}
	\begin{aligned}
		\sw(\sv_\si) &= |R_i|, \\
		\sW(\sv_\si,\sv_\sj) &= H^{d-1}(S_{ij}).
	\end{aligned}
\end{equation}
\end{definition}
With this construction at hand we identify the two spaces $PCR_G$ and $\mathbb{R}^{\sV_G}$ by means of the following isomorphism
\begin{align*}
	\iota	&: PCR_G \to \mathbb{R}^{\sV_G}, \quad \iota(g)(\sv_\si) = g_i.
\end{align*}
Note that
\begin{gather*}
	\int_\Omega f(x) g(x) \, dx = \langle \iota(f), \iota(g) \rangle_{\sw}, \quad \text{and}\\
	J(f) = \sJ(\iota(f))
\end{gather*}
for all $f,g \in PCR_G$. It now follows from \cref{thm:fpcrupcr} that
\begin{align}\label{eq:equivalence}
	\iota(u_f) = \su_{\iota(f)}
\end{align}
for all $f \in PCR_G$.

\begin{remark}
For the sake of completeness we mention that the space $\mathbb{R}^{\sE_G}$ also has a natural counterpart in the continuous setting: the space $PAR_G^0$ of lowest order, i.e.\ piecewise affine, Raviart-Thomas vector fields on the partition $\mathcal{Q}(G)$ with vanishing normal trace on $\partial \Omega$. Introduced in \cite{RavTho77}, Raviart-Thomas vector fields are commonly used in conforming finite element approximations of the space $H(\operatorname{div})$ of $L^2$ vector fields with $L^2$ divergences, see also \cite{BofBreFor13}. The elements of $PAR_G^0$ have a well-defined and constant normal trace on each joint face $S_{ij} \in \mathcal{S}(G)$, recall \cref{eq:sides}. Thus, we can define an isomorphism $\kappa : PAR^0_G  \to \mathbb{R}^{\sE_G}$ by letting $\kappa(F)(\sv_\si,\sv_\sj)$ equal the normal trace of $F$ on $S_{ij}$, up to a potential change of sign depending on the orientation of the edge $(\sv_\si,\sv_\sj)$. With this definition the relation $\iota \circ \divergence = \sfdivergence \circ \kappa$ holds on $PAR_G^0$. See also \cref{fig:cd}.
\end{remark}

\begin{figure}
\centering
\begin{tikzpicture}
  \matrix (m) [matrix of math nodes,row sep=3em,column sep=4em,minimum width=2em]
  {
     PCR_G		& \mathbb{R}^{\sV_G} \\
     PAR^0_G	& \mathbb{R}^{\sE_G} \\};
  \path[-stealth]
    (m-2-1) edge node [left] {$\divergence$} (m-1-1)
    (m-1-1) edge node [above] {$\iota$} (m-1-2)
    (m-2-1.east|-m-2-2) edge node [below] {$\kappa$} (m-2-2)
    (m-2-2) edge node [right] {$\sfdivergence$} (m-1-2);
\end{tikzpicture}
\caption{Commutative diagram illustrating the relations between the four spaces $PCR_G$, $PAR^0_G$, $\mathbb{R}^{\sV_G}$ and $\mathbb{R}^{\sE_G}$.} \label{fig:cd}
\end{figure}

\begin{lemma} \label[lemma]{thm:AdJ}$\iota (A_G (\partial J(0))) = \partial \mathsf{J}(0).$
\end{lemma}
\begin{proof}
Let $p = A_G q$, $q \in \partial J(0).$ Then $\langle p, g \rangle = \langle q, g \rangle \le J(g)$ for all $g \in PCR_G$. Translating to the graph, we have $\langle \iota(p), \mathsf{g} \rangle_{\mathsf{w}} \le \mathsf{J(g)}$ for all $\mathsf{g} \in \mathbb{R}^{\mathsf{V}}$. Hence, $\iota (p) \in \partial \mathsf{J}(0).$

Conversely, let $\mathsf{p} \in \partial \mathsf{J}(0).$ Then $\langle \mathsf{p}, \mathsf{g} \rangle_{\mathsf{w}} \le \mathsf{J(g)}$ for all $\mathsf{g} \in \mathbb{R}^{\mathsf{V}}$. Equivalently, $\langle \iota^{-1}(\mathsf{p}), A_G u \rangle \le J(A_G u)$ for all $u \in L^2(\Omega)$. From the selfadjointness and the total variation diminishing property of $A_G$ we obtain $\langle \iota^{-1}(\mathsf{p}), u \rangle \le J(u)$. Thus, $\iota^{-1}(\mathsf{p}) \in \partial J(0)$. Since $A_G \iota^{-1}(\mathsf{p}) = \iota^{-1}(\mathsf{p})$, we also have $\iota^{-1}(\mathsf{p}) \in A_G( \partial J(0))$.
\end{proof}
We can now prove a special case of \cref{thm:invgeneral}.
\begin{lemma} \label[lemma]{thm:invpcr}
Given datum $f\in PCR_{G}$, the ROF minimizer $u_f$ satisfies
\begin{align*}
	\int_{\Omega}\varphi(u_f(x)) \, dx \le \int_{\Omega}\varphi(u(x)) \, dx
\end{align*}
for all $u\in f-\alpha A_{G}(\partial J(0))$ and every convex function $\varphi:\mathbb{R}\rightarrow\mathbb{R}$.
\end{lemma}
\begin{proof}
By \cref{eq:equivalence} $\iota(u_f)$ solves \cref{eq:ROFgraph} for datum $\iota(f)$. Picking an arbitrary convex $\varphi:\mathbb{R}\rightarrow\mathbb{R}$ and applying \cref{thm:rofinvariancegraph} we obtain
\begin{align*}
	\sum_{\sv\in \sV} \varphi\left(\iota(u_f)(\sv)\right)\sw(\sv) \le \sum_{\sv \in \sV }\varphi\left(\su(\sv)\right)\sw(\sv)
\end{align*}
for all $\su \in \iota(f) - \alpha \partial \sJ(0)$. Equivalently,
\begin{align*}
	\int_{\Omega}\varphi(u_f (x))\,dx \le \int_{\Omega}\varphi(u(x))\,dx
\end{align*}
for all $u \in \iota^{-1}(\iota(f) - \alpha \partial \sJ(0)) = f - \alpha A_G (\partial J(0))$, because of \cref{thm:AdJ}.
\end{proof}

\begin{theorem} \label{thm:invgeneral}
 Given datum $f\in L^{2}(\Omega)$, the ROF minimizer $u_f$ satisfies
\begin{align*}
	\int_{\Omega}\varphi(u_f(x))dx \le \int_{\Omega}\varphi(u(x))dx
\end{align*}
for every $u \in f-\alpha\partial J(0)$ and every convex function $\varphi:\mathbb{R}\rightarrow\mathbb{R}$.
\end{theorem}
\begin{proof}
Let $(G_k)$ be a sequence of grids such that $h(G_k) \to 0$. Due to \cref{thm:fpcrupcr} $u_k \coloneqq u_{A_{G_k}f} \in PCR_{G_k}$ and, due to \cref{thm:rate}, $u_k \to u_f$ in $L^{2}(\Omega)$.

Consider an arbitrary element $u\in f-\alpha\partial J(0)$. It is clear that $A_{G_{k}}u\in A_{G_{k}}f-\alpha A_{G_{k}}(\partial J(0))$.
Moreover, we also have $u_{k}\in A_{G_{k}}f-\alpha A_{G_{k}}(\partial J(0))$. Applying first \cref{thm:invpcr} and then \cref{lemma:operatorA}, we obtain the inequalities
\begin{align*}
	\int_{\Omega}\varphi\left(u_{k}(x)\right)dx\leq\int_{\Omega}\varphi\left((A_{G_{k}}u)(x)\right)dx\leq\int_{\Omega}\varphi\left(u(x)\right)dx,
\end{align*}
which are valid for every convex $\varphi:\mathbb{R} \to \mathbb{R}$.

The remaining step is to verify lower semicontinuity of $g \mapsto \int_{\Omega}\varphi\left(g(x)\right)dx$ in $L^{2}(\Omega)$. This, however, is a direct consequence of the convexity of $\varphi$. See, for instance, \cite[Thm. 3.20]{Dacorogna1}.
Therefore
\begin{align*}
	\int_{\Omega}\varphi\left(u_f(x)\right)dx\leq\underset{k\rightarrow\infty}{\liminf}\int_{\Omega}\varphi\left(u_{k}(x)\right)dx\leq\int_{\Omega}\varphi\left(u(x)\right)dx.
\end{align*}
\end{proof}
\begin{remark}\label[remark]{rem:prox-nonexpansive}
By choosing $\varphi(\cdot) = \left|\cdot\right|^p$, $1 \le p < \infty$, Theorem \ref{thm:invgeneral} implies that $\left\| u_f \right\|_{L^p}  \le \left\| u \right\|_{L^p}$ for all $u \in f - \alpha \partial J(0)$. A limiting argument shows that the inequality also holds for $p=\infty.$ Note, however, that the norms might be infinite for $p>2.$ 
In particular, we have
\begin{align}\label{eq:rof-nonexpansive}
	\norm{u_f}_{L^p(\Omega)} \le \norm{f}_{L^p(\Omega)}
\end{align}
for all $p\in [1,\infty],$ showing finiteness of the error bounds in \eqref{eq:estimate2} and \eqref{eq:estimate1d}.
\end{remark}

\begin{remark}
Theorem \ref{thm:invgeneral} is analogous to \cite[Thm.\ 4.46]{Scherzer1}, which concerns the isotropic ROF model. While it might be possible to adapt the arguments of \cite[Thm.\ 4.46]{Scherzer1} to the anisotropic setting, we think the proof of Theorem \ref{thm:invgeneral} is of independent interest, revealing precise connections between the discrete and continuous settings. On the other hand, since the essential Theorem \ref{thm:fpcrupcr} fails to hold for the isotropic ROF model (see \cite[Prop.\ 6]{Meyer1}), a proof based on the spaces $PCR_G$ cannot be directly applied to that setting. We also mention that the case $p=\infty$ of inequality \eqref{eq:rof-nonexpansive} follows from a maximum principle. See the proof of \cite[Lem.~2.1]{Cha04} or \cite[Prop.~3.6]{IglMer21}.
\end{remark}

\begin{remark}
In \cite{Kruglyak3} invariant $\varphi$-minimal sets were introduced as subsets of $L^1(\Omega).$ We can extend this definition to $L^p(\Omega)$, $1\le p \le \infty$, in the following way. A set $M \subset L^p(\Omega)$ is \emph{invariant $\varphi$-minimal}, if for every $g \in L^p(\Omega)$ there is an element $u^*_g \in M$ of best approximation in the sense that
\begin{equation*}
	\int_\Omega \varphi(u^*_g - g)\,dx \le \int_\Omega \varphi(u - g)\,dx
\end{equation*}
holds for all $u \in M$ and all convex functions $\varphi.$ This property is invariant under translation and scaling, meaning that, for every $\beta \in \mathbb{R}$ and $h\in L^p(\Omega)$, the set $h+ \beta M$ is again invariant $\varphi$-minimal. Theorem \ref{thm:invgeneral} implies that $\alpha \partial J(0)$ is an invariant $\varphi$-minimal set in $L^2(\Omega)$ for every $\alpha >0$. The scaled subdifferential of the isotropic total variation $\alpha \partial J_2(0)$ is another example of an invariant $\varphi$-minimal set in $L^2(\Omega)$ by \cite[Thm.\ 4.46]{Scherzer1}. Both results provide a characterization of the element $u^*_{f} \in \alpha \partial J(0)$ of best approximation to $f\in L^2(\Omega)$ in terms of the respective ROF minimizer
$$ u^*_{f} = f - u_f.$$
Note that $u^*_f$ is nothing but the solution of the Fenchel dual of the ROF problem.
\end{remark}

\subsection*{Acknowledgments}
We acknowledge support by the Austrian Science Fund (FWF) within the national research network ``Geometry $+$ Simulation," S117, subproject 4. The financial support by the Austrian Federal Ministry for Digital and Economic Affairs, the National Foundation for Research, Technology and Development and the Christian Doppler Research Association is gratefully acknowledged. This work was initiated while ES was affiliated with the Johann Radon Institute for Computational and Applied Mathematics (RICAM) of the Austrian Academy of Sciences. For open access purposes, the authors have applied a CC BY public copyright license to any author-accepted manuscript version arising from this submission.

\end{document}